\numberwithin{equation}{section}
\numberwithin{figure}{section}
\DeclareSymbolFont{bbold}{U}{bbold}{m}{n}
\DeclareSymbolFontAlphabet{\mathbbold}{bbold}
\newcommand{\ind}{\mathbbold{1}}
\theoremstyle{plain} \newtheorem{theorem}{Theorem}[section]
\theoremstyle{plain} 
\theoremstyle{plain} 
\theoremstyle{plain} \newtheorem{proposition}[theorem]{Proposition}
\theoremstyle{remark} \newtheorem{remark}[theorem]{Remark}
\theoremstyle{definition} \newtheorem{example}[theorem]{Example}
\theoremstyle{definition} 
\theoremstyle{remark}
\begin{document}

\title{Unseparated pairs and fixed points in random permutations}

\author{Persi Diaconis},
\address{Persi Diaconis \\
Department of Mathematics \\
Stanford University \\
Building 380, Sloan Hall \\
Stanford, CA 94305 \\
U.S.A.}
\email{diaconis@math.stanford.edu}
\urladdr{http://www-stat.stanford.edu/~cgates/PERSI/}

\author{Steven N.\ Evans}
\address{Steven N.\ Evans \\
  Department of Statistics \#3860 \\
  University of California at Berkeley \\
  367 Evans Hall \\
  Berkeley, CA 94720-3860 \\
  U.S.A.}
\email{evans@stat.Berkeley.EDU}
\urladdr{http://www.stat.berkeley.edu/users/evans/}

\author{Ron Graham}
\address{Ron Graham \\
Department of Computer Science \\
University of California at San Diego \\
9500 Gilman Drive \#0404 \\
La Jolla, CA  92093-0404 \\
U.S.A.}
\email{graham@ucsd.edu}
\urladdr{http://cseweb.ucsd.edu/~rgraham/}

\thanks{PD supported in part by NSF grant DMS-08-04324, 
SNE supported in part by NSF grant DMS-09-07630,
RG supported in part by NSF grant DUE-10-20548}

\keywords{shuffle, derangement, Markov chain, Chinese restaurant process,
Poisson distribution, Stein's method, commutator, smoosh, wash}
\subjclass[2000]{60C05, 60B15, 60F05}

\begin{abstract}
In a uniform random permutation $\Pi$ of $[n] := \{1,2,\ldots,n\}$,
the set of elements $k \in [n-1]$ such that $\Pi(k+1) = \Pi(k) + 1$ has
the same distribution as the set of fixed points of $\Pi$ that lie in $[n-1]$.
We give three different proofs of this fact using, respectively, an
enumeration relying on the inclusion-exclusion principle,
the introduction of two different Markov chains to generate uniform
random permutations, and the construction of a combinatorial bijection.
We also obtain the distribution of the analogous set 
for circular permutations that consists of those $k \in [n]$ such that
$\Pi(k+1 \mod n) = \Pi(k) + 1 \mod n$.  This latter random set is just
the set of fixed points of the commutator
$[\rho, \Pi]$, where $\rho$ is the $n$-cycle $(1,2,\ldots,n)$.  We show
for a general permutation $\eta$ that, under weak conditions
on the number of fixed points and $2$-cycles of $\eta$, the 
total variation distance between the distribution of the number of fixed
points of $[\eta,\Pi]$ and a Poisson distribution with expected value $1$
is small when $n$ is large.
\end{abstract}

\maketitle

\section{Introduction}

The goal of any procedure for shuffling a deck of $n$ cards
labeled with, say, $[n] := \{1,2,\ldots,n\}$ is to take
the cards in some specified original order, which we may
take to be $(1,2,\ldots,n)$,  and re-arrange them randomly
in such a way that all $n!$ possible orders are close to
being equally likely.  A natural approach to checking empirically
whether the outcomes of a given shuffling procedure deviate from
uniformity is to apply some fixed numerical function to each of the 
permutations produced by several independent instances
of the shuffle and determine whether the resulting empirical
distribution  is close to the distribution
of the random variable that would arise from applying the chosen
function to a uniformly distributed permutation.

{\em Smoosh} shuffling (also known as {\em wash, corgi, 
chemmy} or {\em Irish} shuffling) is a simple physical mechanism
for randomizing a deck of cards -- see \cite{You11} for
an article that has a brief discussion of
smoosh shuffling and a link to a video of
the first author carrying it out, and \cite{Dia13, wiki_shuffle}
for other short descriptions.  In their forthcoming analysis
of this shuffle, \cite{BaCoDi13}
use the approach described above with
the function that takes a permutation
$\pi \in \mathfrak{S}_n$, the set of permutations of
$[n] := \{1,2, \ldots, n\}$, and returns
the cardinality of the set of labels $k \in [n-1]$ such that 
 $\pi(k+1) = \pi(k)+1$.  That is, they count the number of pairs of cards
 that were adjacent in the original deck and aren't separated
 or in a different relative order at the completion of the 
 shuffle. For example, the permutation $\pi$ of $[6]$ given by
\[
\pi = 
\begin{pmatrix}
1 & 2 & 3 & 4 & 5 & 6 & 7 \\
5 & 6 & 7 & 4 & 1 & 2 & 3
\end{pmatrix}
\]
has $\{k \in [6]: \pi(k+1) = \pi(k)+1\} = \{1,2,5,6\}$.
  
 If we write $\Pi_n$ for a random permutation that
is uniformly distributed on $\mathfrak{S}_n$ and
$\mathbf{S}_n \subseteq [n-1]$ for
the set of labels $k \in [n-1]$ such that
$\Pi_n(k+1) = \Pi_n(k) + 1$, then, in order to
support the contention that the smoosh shuffle is producing
a random permutation with a distribution close to uniform,
it is necessary to know, at least approximately, the distribution of 
the integer-valued random variable $\#\mathbf{S}_n$.
Banklader et al. \cite{BaCoDi13} use Stein's method (see, for example, 
\cite{MR2121796} for a survey), to show that the distribution of
$\#\mathbf{S}_n$ is close to a Poisson distribution with
expected value $1$ when $n$ is large.

The problem of computing $\mathbb{P}\{\#\mathbf{S}_n = 0\}$
(or, more correctly, the integer $n! \mathbb{P}\{\#\mathbf{S}_n = 0\}$)
appears  in various editions of
the 19th century textbook on combinatorics
and probability, {\em Choice and Chance} by William Allen Whitworth.
For example, Proposition XXXII in Chapter IV of \cite{Whi63}
gives 
\begin{equation}
\label{E:Whitworth}
\mathbb{P}\{\#\mathbf{S}_n = 0\}
=
\sum_{k=0}^n \frac{(-1)^k}{k!} 
+ \frac{1}{n} \sum_{k=0}^{n-1} \frac{(-1)^k}{k!}.
\end{equation}

This formula is quite suggestive.  The probability that
$\Pi_n$ has no fixed points is
$\sum_{k=0}^n \frac{(-1)^k}{k!}$ by de Montmort's \cite{deM_13}
celebrated enumeration of derangements, and
so if we write $\mathbf{T_n} \subseteq [n-1]$ for the
set of labels $k \in [n-1]$ such that $\Pi_n(k) = k$
(that is, $\mathbf{T_n}$ is the set of fixed points of $\Pi_n$
that fall in $[n-1]$), then $\mathbb{P}\{\#\mathbf{T}_n = 0\}
= \mathbb{P}\{\#\mathbf{S}_n = 0\}$ because in order for the set
$\mathbf{T}_n$ to be empty either the permutation $\Pi_n$
has no fixed points or it has $n$ as a fixed point (an event
that has probability $\frac{1}{n}$) and the resulting restriction
of $\Pi_n$ to $[n-1]$ is a permutation of $[n-1]$ that has no fixed points.

The following theorem and the remark after it were 
pointed out to us by Jim Pitman;  they show that much more is true.
Pitman's proof was similar to the enumerative one we present in
Section~\ref{S:enumerative} and he asked if there are other, more
``conceptual'' proofs.  We present two further proofs in Section~\ref{S:Markov}
and Section~\ref{S:bijective} that we hope make it clearer ``why'' the
result is true.

\begin{theorem}
\label{T:main}
For all $n \in \mathbb{N}$, the random sets $\mathbf{S}_n$ 
and $\mathbf{T}_n$ have the same distribution.
In particular, for $0 \le m \le n-1$,
\[
\begin{split}
\mathbb{P}\{\#\mathbf{S}_n = m\}
& =
\mathbb{P}\{\#\mathbf{T}_n = m\} \\
& =
\left(\frac{1}{m!} \sum_{k=0}^{n-m} \frac{(-1)^k}{k!}\right)
\frac{n-m}{n}
+
\left(\frac{1}{(m+1)!} \sum_{k=0}^{n-m-1} \frac{(-1)^k}{k!}\right)
\frac{m+1}{n}. \\
\end{split}
\]
\end{theorem}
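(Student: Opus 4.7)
The plan is to establish $\mathbf{S}_n \stackrel{d}{=} \mathbf{T}_n$ by computing the ``upper probabilities'' $\mathbb{P}(\mathbf{S}_n \supseteq A)$ and $\mathbb{P}(\mathbf{T}_n \supseteq A)$ for an arbitrary $A \subseteq [n-1]$ and showing that both equal $(n-|A|)!/n!$. Möbius inversion on the Boolean lattice of subsets of $[n-1]$ then yields equality of the point probabilities $\mathbb{P}(\mathbf{S}_n = B) = \mathbb{P}(\mathbf{T}_n = B)$ for every $B \subseteq [n-1]$, which is the first assertion of the theorem.

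For $\mathbf{T}_n$ the identity is immediate, since requiring $A$ to consist of fixed points pins down $|A|$ values of $\Pi_n$ and leaves $(n-|A|)!$ free choices for the rest. For $\mathbf{S}_n$, I would decompose $A$ into its maximal runs of consecutive integers: a run of length $\ell$ forces the $\ell+1$ positions it spans to carry $\ell+1$ strictly increasing consecutive values, so those positions collapse into a single ``block.'' Combined with the isolated positions outside $A$, this produces exactly $n-|A|$ distinguishable blocks, and permutations with $\mathbf{S}_n \supseteq A$ correspond bijectively to linear arrangements of these blocks (each of size $s$ occupying $s$ consecutive values of $[n]$ in increasing order), giving the claimed count of $(n-|A|)!$. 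I expect this enumeration to be the main obstacle: one must check that different orderings really produce different permutations (they do, because blocks of equal size still differ in the positions they occupy) and that every qualifying permutation is obtained exactly once.

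Once the two laws agree, the explicit formula reduces to computing $\mathbb{P}(\#\mathbf{T}_n = m)$ directly. I would condition on whether $\Pi_n(n) = n$: if not, then $\#\mathbf{T}_n$ equals the full fixed-point count $\#\mathrm{Fix}(\Pi_n)$, which must equal $m$; if so, then $\#\mathrm{Fix}(\Pi_n) = m+1$. Combining de Montmort's formula $\mathbb{P}(\#\mathrm{Fix}(\Pi_n) = k) = \frac{1}{k!}\sum_{i=0}^{n-k}(-1)^i/i!$ with the elementary observation that, conditional on $\#\mathrm{Fix}(\Pi_n) = k$, the element $n$ is one of the fixed points with probability $k/n$, the two contributions assemble into precisely the two summands in the stated formula. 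This last step is just bookkeeping.
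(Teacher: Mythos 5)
Your proposal is correct and reproduces the paper's enumerative proof (Section~\ref{S:enumerative}) together with Remark~\ref{R:count_distribution}: both compute the upper probability $\mathbb{P}(\mathbf{S}_n \supseteq A) = (n-|A|)!/n!$ by a block decomposition, equate it with $\mathbb{P}(\mathbf{T}_n \supseteq A)$, invoke inclusion-exclusion/M\"obius inversion, and then derive the explicit formula by conditioning on whether $n$ is a fixed point. The only cosmetic difference is that you carry out the block argument in ``position space'' (blocks of consecutive positions forced to carry consecutive increasing values, with the $(n-|A|)!$ arising as orderings of these position-blocks along the value axis), whereas the paper phrases it in ``value space'' (treating each maximal run of $A$ as a super-element of glued values and inserting the remaining labels into admissible slots); these two viewpoints are dual under $\pi \mapsto \pi^{-1}$ and yield the same count.
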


\begin{remark}
\label{R:exchangeable}
Perhaps the most surprising consequence of this
result is that the random set $\mathbf{S}_n \subseteq [n-1]$ is exchangeable;
that is, conditional on $\# \mathbf{S}_n = m$, the conditional
distribution of $\mathbf{S}_n$ is that of $m$ random draws without
replacement from the set $[n-1]$.  This follows
because the same observation holds for the random set $\mathbf{T}_n$
by a symmetry that does not at first appear to have a counterpart
for $\mathbf{S}_n$.  For example, it does not seem obvious {\em a priori}
that $\mathbb{P}\{\{i,i+1\} \subseteq \mathbf{S}_n\}$ for some $i \in [n-2]$
should be the same as 
$\mathbb{P}\{\{j,k\} \subseteq \mathbf{S}_n\}$ for some $j,k \in [n-1]$
with $|j-k| > 1$.
\end{remark}

\begin{remark}
\label{R:count_distribution}
Once we know that $\mathbf{S}_n$ and $\mathbf{T}_n$
have the same distribution,
the formula given in Theorem~\ref{T:main} for the common distribution
distribution of $\#\mathbf{S}_n$ and $\#\mathbf{T}_n$ follows
from the well-known fact that the probability that $\Pi_n$ has
$m$ fixed points is $\frac{1}{m!} \sum_{k=0}^{n-m} \frac{(-1)^k}{k!}$
(something that follows straightforwardly from the formula above
for the probability that $\Pi_n$ has no fixed points)
coupled with the
observation that $\#\mathbf{T}_n = m$ if and only if
either $\Pi_n$ has $m$ fixed points and all of these fall in $[n-1]$
or $\Pi_n$ has $m+1$ fixed points and one of these is $n$.
\end{remark}

We present an enumerative proof of Theorem~\ref{T:main}
in Section~\ref{S:enumerative}.  Although this proof
is simple, it is not particularly illuminating.  We show in Section~\ref{S:Markov}
that the result can be derived with essentially
no computation from a comparison of two different
ways of iteratively generating uniform random permutations.  

Theorem~\ref{T:main} is, of course, equivalent to the statement
that for every subset $S \subseteq [n-1]$ the set
$\{\pi \in \mathfrak{S}_n : \{k \in [n-1] : \pi(k+1) = \pi(k)+1\} = S\}$
has the same cardinality as 
$\{\pi \in \mathfrak{S}_n : \{k \in [n-1] : \pi(k) = k\} = S\}$,
and so if the theorem holds then there must exist a bijection 
$\mathcal{H}: \mathfrak{S}_n \to \mathfrak{S}_n$ such that 
$\{k \in [n-1] : \pi(k+1) = \pi(k)+1\} 
= \{k \in [n-1] : \mathcal{H} \pi(k) = k\}$.  Conversely,
exhibiting such a bijection proves the theorem, and we present
a natural construction of one in Section~\ref{S:bijective}.

The analogue of $\mathbf{S}_n$ for circular permutations
is the random set consisting of those $k \in [n]$ such that
$\Pi(k+1 \mod n) = \Pi(k) + 1 \mod n$.  We obtain
the distribution of this random set via an enumeration
in Section~\ref{S:circular} and then present some bijective
proofs of facts suggested by the enumerative results.

Note that the latter random set is just
the set of fixed points of the commutator
$[\rho, \Pi]$, where $\rho$ is the $n$-cycle $(1,2,\ldots,n)$.  In
Section~\ref{S:commutator} we show
for a general permutation $\eta$ that, under weak conditions
on the number of fixed points and $2$-cycles of $\eta$, the 
total variation distance between the distribution of the number of fixed
points of $[\eta,\Pi]$ and a Poisson distribution with expected value $1$
is small when $n$ is large.

\begin{remark}
It is clear from Theorem~\ref{T:main} that the common distribution of
$\#\mathbf{S}_n$ and $\#\mathbf{T}_n$ is approximately Poisson with expected
value $1$ when $n$ is large.  Write 
$\mathbf{F}_n := \{k \in [n] : \Pi_n(k) = k\}$
for the set of fixed points of the uniform random permutation $\Pi_n$
and $\mathbb{Q}$ for the Poisson probability 
distribution with expected value $1$. 
It is well-known that the total variation distance between the distribution
of $\# \mathbf{F}_n$ and $\mathbb{Q}$ is amazingly small:
\[
d_{\mathrm{TV}}(\mathbb{P}\{\# \mathbf{F}_n \in \cdot\}, \mathbb{Q}) 
\le 
\frac{2^n}{n!},
\]
and so it is natural to ask whether the common distribution of
$\#\mathbf{S}_n$ and $\#\mathbf{T}_n$ is similarly close to $\mathbb{Q}$.
Because $\mathbb{P}\{\#\mathbf{T}_n \ne \#\mathbf{F}_n\} = \frac{1}{n}$,
we might suspect that the total variation distance between the distributions
of $\#\mathbf{T}_n$ and $\#\mathbf{F}_n$ is on the order of $\frac{1}{n}$,
and so the total variation distance between the distribution of 
$\#\mathbf{S}_n$ and $\mathbb{Q}$ is also of that order.  Indeed, it follows
from \eqref{E:Whitworth} that
\[
\begin{split}
\mathbb{P}\{\#\mathbf{S}_n = 0\} 
& = \mathbb{P}\{\#\mathbf{F}_n = 0\} 
+ \frac{1}{n} \sum_{k=0}^{n-1} \frac{(-1)^k}{k!} \\
& \ge
\mathbb{Q}\{0\} - \frac{2^n}{n!} 
+
\frac{1}{n} \sum_{k=0}^{n-1} \frac{(-1)^k}{k!}, \\
\end{split}
\]
and so
\[
d_{\mathrm{TV}}(\mathbb{P}\{\# \mathbf{S}_n \in \cdot\}, \mathbb{Q})
\ge
\frac{e^{-1}}{n} + \mathrm{o}\left(\frac{1}{n}\right).
\]

\end{remark}

\section{An enumerative proof}
\label{S:enumerative}
 
Our first approach to proving Theorem~\ref{T:main} is to compute
$\#\{\pi \in \mathfrak{S}_n: \pi(k_i + 1) = \pi(k_i)+1, \, 1 \le i \le m\}$
for a subset $\{k_1, \ldots, k_m\} \subseteq [n-1]$
and show that this number is 
$(n-m)! = \#\{\pi \in \mathfrak{S}_n: \pi(k_i) = k_i, \, 1 \le i \le m\}$.
This establishes that 
\[
\mathbb{P}\{\{k_1, \ldots, k_m\} \subseteq \mathbf{S}_n\}
=
\mathbb{P}\{\{k_1, \ldots, k_m\} \subseteq \mathbf{T}_n\},
\]
and an inclusion-exclusion argument completes the proof.
 
We begin by noting that we can build up a permutation of $[n]$ by first
taking the elements of $[n]$ in any order and then imagining that
we lay elements down successively so that the $h^{\mathrm{th}}$ element
goes in one of the $h$ ``slots'' defined
by the $h-1$ elements that have already been laid down, that is,
the slot before the first element, the slot after the last element,
or one of the $h-2$ slots between elements.

Consider first the set 
$\{\pi \in \mathfrak{S}_n: \pi(k+1) = \pi(k)+1\}$ for some fixed $k \in [n-1]$. 
We can count this
set by imagining that we first put down 
$k$ and $k+1$ next to each other in that order
and then successively lay down the remaining elements
$[n] \setminus \{k,k+1\}$ in such a way that
no element is ever laid down in the slot between $k$ and $k+1$.
It follows that  
\[
\#\{\pi \in \mathfrak{S}_n : \pi(k+1) = \pi(k)+1\}
=
2 \times 3 \times \cdots (n-1) = (n-1)!,
\] 
as required.  

Now consider the set 
$\{\pi \in \mathfrak{S}_n : \pi(k+1) = \pi(k)+1 \, \& \, \pi(k+2) = \pi(k+1)+1\}$ for some fixed
$k \in [n-2]$.  We can count this
set by imagining that we first put down 
$k$, $k+1$ and $k+2$ next to each other in that order
and then successively lay down the remaining elements
$[n] \setminus \{k,k+1,k+2\}$ in such a way that
no element is ever laid down in the slot between $k$ and $k+1$
or the slot between $k+1$ and $k+2$.  The number of such
permutations is thus $2 \times 3 \times \cdots \times (n-2) = (n-2)!$,
again as required.  On the other hand, suppose we fix
$k,\ell \in [n-1]$ with $|j-k| > 1$ and consider the
set 
$\{\pi \in \mathfrak{S}_n : \pi(k+1) = \pi(k)+1$ 
\, \& \, $\pi(\ell+1) = \pi(\ell)+1\}$.
We imagine that we first put down $k$ and $k+1$
next to each other in that order and then $\ell$ and $\ell+1$
next to each other in that order either before or after the
pair $k$ and $k+1$.  There are two ways to do this.  Then
we successively lay down the remaining elements 
$[n] \setminus \{k,k+1, \ell, \ell+1\}$ in such a way
that no element is ever laid down in the slot between $k$ and $k+1$
or the slot between $\ell$ and $\ell+1$.  There are
$3 \times 4 \times \cdots \times (n-2)$ ways to do this
second part of the construction, and so the number of permutations
we are considering is $2 \times 3 \times 4 \times \cdots \times (n-2)
= (n-2)!$, once again as required.

It is clear how this argument generalizes.
Suppose we have a subset $\{k_1, \ldots, k_m\} \subseteq [n-1]$
and we wish to compute  
$\#\{\pi \in \mathfrak{S_n} : \pi(k_i + 1) = \pi(k_i), \, 1 \le i \le m\}$.
We can break $\{k_1, \ldots, k_m\}$ up into $r$ ``blocks'' of consecutive
labels for some $r$.  There are $r!$ ways to lay down the blocks
and then $(r+1) \times (r+2) \times \cdots \times (n-m)$ ways
of laying down the remaining labels
$[n] \setminus \{k_1, \ldots, k_m\}$ so that no label is inserted
into a slot within one of the blocks.  Thus, the cardinality we wish to 
compute is indeed 
$r! \times (r+1) \times (r+2) \times \cdots \times (n-m) = (n-m)!$.

\section{A Markov chain proof}
\label{S:Markov}

The following proof proceeds by first showing that 
the random set $\mathbf{S}_n$ is
exchangeable and then establishing that the distribution of $\# \mathbf{S}_n$
is the same as the distribution of $\# \mathbf{T}_n$ 
without explicitly calculating either distribution.

Suppose that we build the uniform random permutations
$\Pi_1, \Pi_2, \ldots$ sequentially in the following
manner: $\Pi_{n+1}$ is obtained from $\Pi_n$ by inserting $n+1$ uniformly
at random into one of the $n+1$ ``slots'' defined by the ordered list $\Pi_n$
(i.e. as in Section~\ref{S:enumerative},
we have slots before and after the first and last elements
of the list and $n-1$ slots between successive elements).  The
choice of slot is independent of $\mathcal{F}_n$, where
$\mathcal{F}_n$ is the $\sigma$-field generated by $\Pi_1, \ldots, \Pi_n$.

It is clear that the set-valued stochastic process
$(\mathbf{S}_n)_{n \in \mathbb{N}}$ is Markovian with respect 
to the filtration $(\mathcal{F}_n)_{n \in \mathbb{N}}$.  
In fact, if we write $\mathbf{S}_n = \{X_1^n, \ldots, X_{M_n}^n\}$,
then
\[
\mathbb{P}\{\mathbf{S}_{n+1} = \{X_1^n, \ldots, X_{M_n}^n\} \setminus \{X_i^n\} 
\, | \, \mathcal{F}_n\} = \frac{1}{n+1}, \quad 1 \le i \le M_n,
\]
corresponding to $n+1$ being inserted in the slot between the successive
elements
$X_i^n$ and $X_i^n + 1$ in the list,
\[
\mathbb{P}\{\mathbf{S}_{n+1} = \{X_1^n, \ldots, X_{M_n}^n\} \cup \{n\} 
\, | \, \mathcal{F}_n\} = \frac{1}{n+1},
\]
corresponding to $n+1$ being inserted in the slot to the right of $n$,
and
\[
\mathbb{P}\{\mathbf{S}_{n+1} = 
\{X_1^n, \ldots, X_{M_n}^n\} \, | \, \mathcal{F}_n\} = \frac{(n+1) - M_n - 1}{n+1}.
\]
Moreover, it is obvious from the symmetry inherent in these 
transition probabilities and induction
that $\mathbf{S}_n$ is an exchangeable random subset of
$[n-1]$ for all $n$.
Furthermore, the nonnegative integer valued process
$(M_n)_{n \in \mathbb{N}} = (\# \mathbf{S}_n)_{n \in \mathbb{N}}$ 
is also Markovian with respect 
to the filtration $(\mathcal{F}_n)_{n \in \mathbb{N}}$
with the following transition probabilities:
\[
\mathbb{P}\{M_{n+1} = M_{n} - 1  \, | \, \mathcal{F}_n\} 
= \frac{M_{n}}{n+1},
\]
\[
\mathbb{P}\{M_{n+1} = M_{n}   \, | \, \mathcal{F}_n\} 
= \frac{(n+1) - M_n - 1}{n+1},
\]
and
\[
\mathbb{P}\{M_{n+1} = M_{n} + 1  \, | \, \mathcal{F}_n\} 
= \frac{1}{n+1}.
\]

Because the conditional distribution of $\mathbf{S}_n$
given $\# \mathbf{S}_n =m$ is, by exchangeability, the same
as that of $\mathbf{T}_n$ given $\# \mathbf{T}_n =m$ for $0 \le m \le n-1$,
it will suffice to show that the distribution of $\# \mathbf{S}_n$
is the same as that of $\# \mathbf{T}_n$.  Moreover, because
$\# \mathbf{T}_n$ has the same distribution as 
$\#\{2 \le k \le n : \Pi_n(k) = k\}$ for all $n \in \mathbb{N}$
and $\# \mathbf{S}_1 = \# \mathbf{T}_1 = 0$,
it will certainly be enough to build another sequence 
$(\Sigma_n)_{n \in \mathbb{N}}$ such that 
\begin{itemize}
\item
$\Sigma_n$ is a uniform random permutation of $[n]$ for all 
$n \in \mathbb{N}$,
\item
$(\Sigma_n)_{n \in \mathbb{N}}$ is Markovian with respect
to some filtration $(\mathcal{G}_n)_{n \in \mathbb{N}}$,
\item
$(N_n)_{n \in \mathbb{N}} 
:= (\#\{2 \le k \le n : \Sigma_n(k) = k\})_{n \in \mathbb{N}}$ 
is also Markovian with respect to the filtration 
$(\mathcal{G}_n)_{n \in \mathbb{N}}$ with the following transition
probabilities
\[
\mathbb{P}\{N_{n+1} = N_{n} - 1  \, | \, \mathcal{G}_n\} 
= \frac{N_{n}}{n+1},
\]
\[
\mathbb{P}\{N_{n+1} = N_{n}   \, | \, \mathcal{G}_n\} 
= \frac{(n+1) - N_n - 1}{n+1},
\]
and
\[
\mathbb{P}\{N_{n+1} = N_{n} + 1  \, | \, \mathcal{G}_n\} 
= \frac{1}{n+1}.
\]
\end{itemize}

We recall the simplest instance of the
{\em Chinese restaurant process} that iteratively
generates uniform random permutations (see, for example, \cite{MR2245368}).
Individuals labeled $1,2,\ldots$ successively enter a restaurant
equipped with an infinite number of round tables.  
Individual $1$ sits at some table. Suppose that after the first $n-1$
individuals have entered the restaurant we have a configuration
of individuals sitting around some number of tables.
When individual $n$ enters the restaurant he is equally likely
to sit to the immediate left of one of the individuals already
present or to sit at an unoccupied table.  The permutation
$\Sigma_n$ is defined in terms of the resulting
seating configuration by setting $\Sigma_n(i) = j$, $i \ne j$, if 
individual $j$
is sitting immediately to the left of individual $i$ and
$\Sigma_n(i) = i$ if individual $i$ is sitting by himself
at some table.  Each occupied table corresponds to a cycle
of $\Sigma_n$ and, in particular, tables with a single
occupant correspond to fixed points of $\Sigma_n$.

It is clear that if we let $\mathcal{G}_n$ be the $\sigma$-field generated by 
$\Sigma_1, \ldots, \Sigma_n$, then all of the requirements listed above for
$(\Sigma_n)_{n \in \mathbb{N}}$ and $(N_n)_{n \in \mathbb{N}}$
are met.

\section{A bijective proof}
\label{S:bijective}

As we remarked in the Introduction, in order to prove
Theorem~\ref{T:main} it suffices to find a bijection 
$\mathcal{H}: \mathfrak{S}_n \to \mathfrak{S}_n$ such that 
$\{k \in [n-1] : \pi(k+1) = \pi(k)+1\} 
= \{k \in [n-1] : \mathcal{H} \pi(k) = k\}$ for all $\pi \in \mathfrak{S}_n$. 

Not only will we find such a bijection, but we will prove an
even more general result that requires we first set up
some notation.  Fix $1 \le h < n$.  Let $\rho \in \mathfrak{S}_n$
be the permutation that maps $i \in [n]$ to $i + h \mod n \in [n]$.
Next define the following bijection of $\mathfrak{S}_n$ to itself that is
essentially the {\em transformation fondamentale} of 
\cite[Section 1.3]{MR0272642} (such bijections seem to have been first
introduced implicitly in \cite[Chapter 8]{MR0096594}).
Take a permutation $\pi$ and write it in cycle form
$(a_1, a_2, \ldots, a_r)(b_1, b_2, \ldots, b_s) \cdots 
(c_1, c_2, \ldots, c_t)$, where in each cycle the leading element 
is the least element of the cycle 
and these leading elements form a decreasing sequence. 
That is, $a_1 > b_1 > \cdots > c_1$. 
Next, remove the parentheses to form an ordered listing
$(a_1, a_2, \ldots, a_r, b_1, b_2 , \ldots , b_s, 
c_1, c_2, \ldots, c_t)$ of $[n]$ and define $\hat \pi \in \mathfrak{S}_n$
by taking $(\hat \pi(1), \hat \pi(2), \ldots, \hat \pi(n))$ to be
this ordered listing.

The following result for $h=1$ provides a bijection that establishes
Theorem~\ref{T:main}.

\begin{theorem}
\label{T:bijection}
For every $\pi \in \mathfrak{S}_n$,
\[
\{k \in [n-h] : \widehat{\rho \pi}^{-1}(k+h) = \widehat{\rho \pi}^{-1}(k) + 1\}
=
\{k \in [n-h] : \pi(k) = k\}.
\]
\end{theorem}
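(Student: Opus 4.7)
Set $\sigma := \rho\pi$, so the theorem asks us to show that for $k \in [n-h]$, the element $k+h$ immediately follows $k$ in the ordered listing $(\hat\sigma(1),\hat\sigma(2),\ldots,\hat\sigma(n))$ if and only if $\pi(k)=k$. The plan is to exploit the rigidity of the canonical cycle ordering used to define $\hat{\,\cdot\,}$: since each cycle starts with its minimum and the cycles are listed so that these minima decrease, very little freedom remains in the ``local'' structure of consecutive pairs in the sequence.

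First I would break the condition ``$k+h$ immediately follows $k$ in $(\hat\sigma(1),\ldots,\hat\sigma(n))$'' into two exhaustive cases according to whether this happens within a single cycle or across a cycle boundary:
\begin{itemize}
\item[(A)] $k$ and $k+h$ lie in the same cycle of $\sigma$ and $\sigma(k)=k+h$ (so $k$ is not the last element of its cycle);
\item[(B)] $k$ is the last element of one cycle $C_k$ and $k+h$ is the leading (i.e.\ minimum) element of the next cycle $C_{k+h}$.
\end{itemize}

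Next I would rule out case (B). If $C_k$ has minimum $a$ then $a\le k$ since $k\in C_k$, while the rule that leading elements decrease along the list forces $a > k+h$ (because $C_{k+h}$ immediately follows $C_k$ and has leading element $k+h$). These two inequalities together give $k\ge a>k+h$, contradicting $h\ge 1$. So only case (A) can produce the adjacency, and the condition reduces to $\sigma(k)=k+h$. Because $k\in[n-h]$ implies $k+h\le n$, we have $\rho(k)=k+h$ (no wraparound), and so $\sigma(k)=\rho(\pi(k))=k+h$ is equivalent to $\pi(k)=k$.

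For the converse I would argue directly: assume $\pi(k)=k$ with $k\in[n-h]$. Then $\sigma(k)=\rho(k)=k+h$, so $k$ and $k+h$ lie in a common cycle of $\sigma$, with $k$ mapped to $k+h$ by $\sigma$. The minimum $a$ of this cycle satisfies $a\le k<k+h$, so $k+h$ is not this minimum; hence $k+h$ is not the initial entry of its cycle in the listing, and so it appears immediately after $k$. Combining the two directions yields the desired set equality. The main obstacle is essentially a bookkeeping one, namely being careful that the hypothesis $k\in[n-h]$ is used precisely where needed to exclude modular wraparound in $\rho$ and to make the inequality argument that kills case (B) tight; once that is nailed down, the proof is an almost automatic consequence of the canonical form defining $\hat\sigma$.
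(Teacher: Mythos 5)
Your proof is correct and takes essentially the same route as the paper's: the direction from $\pi(k)=k$ to adjacency is identical, and your case (B) elimination (a cycle boundary between $k$ and $k+h$ would force the impossible inequality $k \ge a > k+h$) is just a repackaging of the paper's observation that cycle starts in the canonical listing are exactly the left-to-right minima, so $k+h$ cannot begin a new cycle after $k$.
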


\begin{proof}
Suppose for some $k \in [n-h]$ that $\pi(k) = k$.  Then, $\rho \pi(k) = k+h$,
because no reduction modulo $n$ takes place.  If we write the cycle decomposition
of $\rho \pi$ in the canonical form described above, then there will be
a cycle of the form $(\ldots, k, k+h, \ldots)$ because of the convention
that each cycle begins with its least element.
After the parentheses are removed to form $\widehat{\rho \pi}$, we will have
$\widehat{\rho \pi}(j) = k$ and $\widehat{\rho \pi}(j + 1) = k+h$
for some $j \in [n]$. Hence, 
$\widehat{\rho \pi}^{-1}(k) = j$ and 
$\widehat{\rho \pi}^{-1}(k+h) = j+1 = \widehat{\rho \pi}^{-1}(k)+1$.

Conversely, suppose for some $k \in [n-h]$ that
$\widehat{\rho \pi}^{-1}(k+h) = \widehat{\rho \pi}^{-1}(k) + 1$,
so that $\widehat{\rho \pi}^{-1}(k)=j$ and 
$\widehat{\rho \pi}^{-1}(k+h) = j+1$ for some $j \in [n]$.
Then, $\widehat{\rho \pi}(j)=k$ and $\widehat{\rho \pi}(j+1) = k+h$.
The canonical cycle decomposition of $\rho \pi$
is obtained by taking the ordered listing 
$(\widehat{\rho \pi}(1), \widehat{\rho \pi}(2), \ldots, \widehat{\rho \pi}(n))$,
placing left parentheses before each element that is smaller than its
predecessors to the left, and then inserting right parentheses as necessary to produce
a legal bracketing.  It follows that $\rho \pi$ must have a cycle
of the form $(\ldots, k, k+h, \ldots)$, and hence $\rho \pi(k) = k+h$.  Thus,
$\pi(k) = k$, as required.
\end{proof}

\begin{remark}
We give the following example of the construction of $\widehat{\rho \pi}^{-1}$
from $\pi$ for the benefit of the reader.
Suppose that $n=7$ and
\[
\pi = 
\begin{pmatrix}
1 & 2 & 3 & 4 & 5 & 6 & 7 \\
7 & 2 & 6 & 4 & 1 & 3 & 5
\end{pmatrix},
\]
so that $\pi$ has canonical cycle decomposition
\[
(4) (3,6) (2) (1,7,5).
\]  
For $h=1$,
\[
\rho \pi =
\begin{pmatrix}
1 & 2 & 3 & 4 & 5 & 6 & 7 \\
1 & 3 & 7 & 5 & 2 & 4 & 6
\end{pmatrix}.
\]
The canonical cycle decomposition of $\rho \pi$ is
\[
(2,3,7,6,4,5) (1). 
\]
Thus,
\[
\widehat{\rho \pi}
=
\begin{pmatrix}
1 & 2 & 3 & 4 & 5 & 6 & 7 \\
2 & 3 & 7 & 6 & 4 & 5 & 1
\end{pmatrix}.
\]
and
\[
\widehat{\rho \pi}^{-1}
=
\begin{pmatrix}
1 & 2 & 3 & 4 & 5 & 6 & 7 \\
7 & 1 & 2 & 5 & 6 & 4 & 3
\end{pmatrix}.
\]
Note that it is indeed the case that
\[
\{k \in [6] : \pi(k) = k\} 
= 
\{2,4\} 
=
\{k \in [6] : \widehat{\rho \pi}^{-1}(k+1) = \widehat{\rho \pi}^{-1}(k) + 1\}.
\]
\end{remark}

\begin{remark}
It follows from Theorem~\ref{T:bijection} and the
argument outlined in Remark~\ref{R:count_distribution} that the
probability the random variable 
$\#\{k \in [n-h] : \Pi_n(k+h) = \Pi_n(k) + 1\}$ takes the values $m$ is
\[
\sum_{\ell = m}^{m+h}
\left(\frac{1}{\ell!} \sum_{k=0}^{n-\ell} \frac{(-1)^k}{k!}\right)
\frac{\binom{n-h}{m} \binom{h}{\ell - m}}{\binom{n}{\ell}}
\]
for $0 \le m \le n-h$.
\end{remark}

\section{Circular permutations}
\label{S:circular}

A question closely related to the ones we have
been considering so far is to ask for the distribution
of the random set 
\[
\mathbf{U}_n := \{k \in [n] : \Pi_n(k+1 \mod n) = \Pi_n(k)+1 \mod n\}.
\]
That is, we think of our deck $[n]$ as being ``circularly ordered'',
with $n$ followed by $1$, and ask for the distribution of the number
of cards that are followed immediately by their original successor when we lay
the shuffled deck out around the circumference of a circle.

\begin{proposition}
\label{P:circular}
The random set $\mathbf{U}_n$ is exchangeable with
\[
\mathbb{P}\{\#\mathbf{U}_n = m\}
=
\frac{1}{m!} 
\left(\sum_{h=0}^{n-m-1} (-1)^h \frac{1}{h!}
\frac{n}{(n-m-h)}
+ (-1)^{n-m} \frac{1}{(n-m)!} n \right)
\]
for $0 \le m \le n$.
\end{proposition}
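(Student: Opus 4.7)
The plan is to adapt the enumerative strategy of Section~\ref{S:enumerative} to the circular setting. For each subset $A \subseteq [n]$ with $|A| = m$, I would first compute
\[
N_A := \#\{\pi \in \mathfrak{S}_n : \pi(k+1 \bmod n) = \pi(k) + 1 \bmod n \text{ for all } k \in A\}.
\]
To do so, partition $A$ into $r$ maximal runs of integers consecutive modulo $n$, of sizes $m_1, \ldots, m_r$ summing to $m$. A run of length $m_i$ forces a ``position block'' of $m_i + 1$ consecutive positions on the circle to take $m_i + 1$ consecutive values modulo $n$, with the whole block determined by the value chosen at its first position. Together with the $n - m - r$ isolated positions, this gives a total of $n - m$ labeled ``atoms'' whose images must partition $[n]$.

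The central combinatorial step is to count the number of ways to place $n - m$ labeled atoms of prescribed sizes (summing to $n$) on the labeled circle $[n]$. One can choose a cyclic ordering of the atoms in $(n - m - 1)!$ ways and then select a rotation specifying where the first atom begins in $n$ ways, so $N_A = n \cdot (n-m-1)!$ whenever $0 \le m \le n-1$. Remarkably, this count is independent of the particular run sizes. The case $m = n$ is special: then $A = [n]$, the constraints wrap around the circle, and the only admissible $\pi$ are the $n$ rotations, giving $N_A = n$.

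Because $N_A$ depends only on $|A|$, an inclusion-exclusion argument shows that $\mathbb{P}\{\mathbf{U}_n = B\}$ depends only on $|B|$, which yields exchangeability of $\mathbf{U}_n$. For the count, I would use the standard inversion identity $\mathbb{P}\{X = m\} = \sum_{\ell \ge m} (-1)^{\ell - m} \binom{\ell}{m} \mathbb{E}\bigl[\binom{X}{\ell}\bigr]$ applied to $X = \#\mathbf{U}_n$, together with
\[
\mathbb{E}\left[\binom{\#\mathbf{U}_n}{\ell}\right] = \binom{n}{\ell} \frac{N_A}{n!},
\]
and then simplify via $\binom{\ell}{m}\binom{n}{\ell} = \binom{n}{m}\binom{n-m}{\ell-m}$ and the substitution $h = \ell - m$ to recover the formula in Proposition~\ref{P:circular}.

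The main obstacle is the atom-counting step: one must verify carefully that the count $n(n-m-1)!$ is independent of the individual block sizes, being careful to treat the position blocks and isolated positions uniformly as labeled atoms on a labeled circle (avoiding the usual off-by-one pitfalls between circular and linear orderings). A second, minor obstacle is that the $m = n$ term must be pulled out of the inclusion-exclusion separately, and it is precisely this boundary contribution that produces the isolated summand $(-1)^{n-m}\frac{n}{(n-m)!}$ in the stated formula.
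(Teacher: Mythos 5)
Your proposal is correct and follows essentially the same route as the paper: the key step in both is to show, via the decomposition of the index set into maximal circular runs, that the number of permutations satisfying a set of $m \le n-1$ unseparated-pair constraints equals $n(n-m-1)!$ regardless of run structure, with $m=n$ handled separately, and then to conclude by inclusion-exclusion. Your minor variations --- counting cyclic arrangements of the $n-m$ atoms directly rather than by the paper's sequential slot-insertion of the isolated labels, and invoking the factorial-moment inversion identity rather than computing $\mathbb{P}\{\mathbf{U}_n = B\}$ and multiplying by $\binom{n}{m}$ --- are algebraically equivalent to what the paper does and do not change the substance of the argument.
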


\begin{proof}
Consider a subset $\{k_1, \ldots, k_m\} \subseteq [n]$.
We wish to compute  
\[
\#\{\pi \in \mathfrak{S}_n: 
\pi(k_i + 1 \mod n) = \pi(k_i) + 1 \mod n, \, 1 \le i \le m\}.
\]
When $m=n$ this number is clearly $n$ and when $m=0$ it is $n!$.
Consider $1 \le m \le n$.
For some positive integer $r$ we can break $\{k_1, \ldots, k_m\}$ 
up into $r$ ``runs'' of labels that are ``consecutive'' modulo
$n$; that is we can write $\{k_1, \ldots, k_m\}$  
as the disjoint union of sets $\{\ell_1, \ell_1+1, \ldots, \ell_1 + s_1-1\}$
$\{\ell_2, \ell_2 +1, \ldots, \ell_2 + s_2-1\}$, $\ldots$, 
$\{\ell_r, \ell_2 +1, \ldots, \ell_r + s_r-1\}$, where all additions are
$\mod n$ and $\ell_i + s_i \ne \ell_j$ for $i \ne j$.
This leads to $r$ disjoint ``blocks'' 
$\{\ell_1, \ell_1+1, \ldots, \ell_1 + s_1\}$, $\ldots$, 
$\{\ell_r, \ell_2 +1, \ldots, \ell_r + s_r\}$
of labels that must be kept together if we take the permutation
and join up the last element of the resulting ordered listing of $[n]$
with the first to produce a circularly ordered list.
There are $(r-1)!$ ways to circularly order the blocks.  Initially
this leaves $r$ slots between the $r$ blocks when we think of them as
being ordered around a circle.  Also, there
are initially $n-m-r$ labels that are not contained in some
block.  It follows that there are
then $r \times (r+1) \times \cdots \times (n-m-1)$ ways
of laying down the remaining $n-m-r$ elements of $[n]$
that aren't in a block so that no element is inserted
into a slot within one of the blocks.  Finally, there are $n$ places
between the $n$ circularly ordered elements 
of $[n]$ where we can cut
to produce a permutation of $[n]$. Thus, the 
cardinality we wish to compute is  
$(r-1)!
\times 
r \times (r+1) \times \cdots \times (n-m-1) \times n
= (n-m-1)! \times n$.

We see that
\[
\mathbb{P}\{\{k_1, \ldots, k_m\} \subseteq \mathbf{U}_n\}
=
\begin{cases}
1,& \quad m = 0, \\
\frac{1}{(n-m) (n-m+1) \cdots (n-1)},& \quad 1 \le m \le n-1, \\
\frac{1}{(n-1)!},& \quad m = n.
\end{cases}
\]
Consequently, by inclusion-exclusion,
\[
\begin{split}
\mathbb{P}\{\mathbf{U}_n = \{k_1, \ldots, k_m\}\}
& =
\sum_{h=0}^{n-m-1} (-1)^h \binom{n-m}{h} 
\frac{1}{(n-m-h) (n-m-h+1) \cdots (n-1)} \\
& \quad + (-1)^{n-m} \frac{1}{(n-1)!}\\
& =
\frac{(n-m)!}{(n-1)!}
\sum_{h=0}^{n-m-1} (-1)^h \frac{1}{h!}
\frac{1}{(n-m-h)} 
+ (-1)^{n-m} \frac{1}{(n-1)!}\\
\end{split}
\]

In particular, $\mathbf{U}_n$ is exchangeable and
\[
\begin{split}
\mathbb{P}\{\# \mathbf{U}_n = m\}
& =
\binom{n}{m}
\left(
\frac{(n-m)!}{(n-1)!}
\sum_{h=0}^{n-m-1} (-1)^h \frac{1}{h!}
\frac{1}{(n-m-h)} 
+ (-1)^{n-m} \frac{1}{(n-1)!}\right)\\
& =
\frac{1}{m!} 
\left(
\sum_{h=0}^{n-m-1} (-1)^h \frac{1}{h!}
\frac{n}{(n-m-h)}
+ (-1)^{n-m} \frac{1}{(n-m)!} n \right).\\
\end{split}
\]
\end{proof}

\begin{remark}
As expected, 
$\mathbb{P}\{\mathbf{U}_n = m\}$
converges to the Poisson probability
$e^{-1} \frac{1}{m!}$ as $n \to \infty$.
\end{remark}

The exchangeability of $\mathbf{U}_n$ implies
that there are is at least one bijection (and hence many) between the sets
\[
\#\{\pi \in \mathfrak{S}_n: 
\pi(k_i' + 1 \mod n) = \pi(k_i') + 1 \mod n, \, 1 \le i \le m\}
\]
and
\[
\#\{\pi \in \mathfrak{S}_n: 
\pi(k_i'' + 1 \mod n) = \pi(k_i'') + 1 \mod n, \, 1 \le i \le m\}
\]
for two subsets $\{k_1', \ldots, k_m'\}$ and $\{k_1'', \ldots, k_m''\}$
of $[n]$.   This leads to the question of
whether there is a bijection with a particularly nice description.  Rather than
pursue this question directly, we give a bijective explanation
of the following interesting consequence of 
Proposition~\ref{P:circular} from which the desired bijection
can be readily derived.

Observe that
\[
\begin{split}
\mathbb{P}\{\mathbf{U}_n = \{k_1, \ldots, k_m\}\}
& =
\sum_{h=0}^{n-m-1} (-1)^h \binom{n-m}{h} 
\frac{1}{(n-m-h) \cdots (n-1)} \\
& \quad + (-1)^{n-m} \frac{1}{(n-1)!}, \\
\end{split}
\]
whereas
\[
\begin{split}
\mathbb{P}\{\mathbf{U}_{n-m} = \emptyset\}
&  =
\sum_{h=0}^{n-m-1} (-1)^h \binom{n-m}{h} 
\frac{1}{(n-m-h) \cdots (n-m-1)} \\
& \quad + (-1)^{n-m} \frac{1}{(n-m-1)!}, \\
\end{split}
\]
so that
\begin{equation}
\label{E:circular_count}
(n-1)! \mathbb{P}\{\mathbf{U}_n = \{k_1, \ldots, k_m\}\}
=
(n-m-1)! \mathbb{P}\{\mathbf{U}_{n-m} = \emptyset\}.
\end{equation}

Let $\rho \in \mathfrak{S}_n$
be the permutation that maps $i \in [n]$ to $i + 1 \mod n \in [n]$.
Define an equivalence relation on $\mathfrak{S}_n$ by declaring that
$\pi'$ and $\pi''$ are equivalent if and only if $\rho^k \pi' = \pi''$
for some $k \in \{0,1,\ldots,n-1\}$.  We call the set of
equivalence classes the circular permutations of $[n]$
and denote this set by $\mathfrak{C}_n$.  
Note that 
$\# \mathfrak{C}_n = (n-1)!$.
We will write
$\sigma \in \mathfrak{C}_n$ as an ordered listing
$(\sigma(1), \ldots, \sigma(n))$ of $[n]$, with the understanding
that the listings produced by a cyclic permutation of the
coordinates also represent $\sigma$: a permutation 
$\pi \in \mathfrak{S}_n$ is in the equivalence class $\sigma$
if for some $k \in \{0,1,\ldots,n-1\}$ we have
$\pi(i) = \sigma(i + k \mod n)$ for $i \in [n]$.   We can 
also think of
$(\sigma(1), \ldots, \sigma(n))$ as the cycle representation of
a permutation $\tilde \sigma$
of $[n]$ consisting of a single $n$-cycle (that is,
the permutation $\tilde \sigma$
sends $\sigma(i)$ to $\sigma(i+1 \mod n)$).
Hence we can also regard  $\mathfrak{C}_n$ as the set of 
$n$-cycles in $\mathfrak{S}_n$.

If $\pi \in \mathfrak{S}_n$, then the set 
\[
\{j \in [n] : \pi^{-1}(j+1 \mod n) = \pi^{-1}(j) + 1 \mod n\}
\]
is unchanged if we replace $\pi$ by an equivalent permutation.
We denote the common value for the equivalence class 
$\sigma \in \mathfrak{C}_n$
to which $\pi$ belongs by $\Theta_n(\sigma)$.  
In terms of the $n$-cycle $\tilde \sigma \in \mathfrak{S}_n$
associated with $\sigma$,
\[
\Theta_n(\sigma) 
= 
\{j \in [n]: \tilde \sigma(j) = j + 1 \mod n\}.
\]

The identity
\eqref{E:circular_count} is equivalent to the identity
\begin{equation}
\label{E:circular_count_2}
\#\{\tau \in \mathfrak{C}_n : \Theta_n(\tau) = \{k_1, \ldots, k_m\}\}
= 
\#\{\sigma \in \mathfrak{C}_{n-m} : \Theta_n(\sigma) = \emptyset\}
\end{equation}
for any subset $\{k_1, \ldots, k_m\} \subseteq [n]$, and we will
give a bijective proof of this fact.

Consider $\sigma \in \mathfrak{C}_{n-m}$ with 
$\Theta_{n-m}(\sigma) = \emptyset$.
Suppose that we have indexed $\{k_1, \ldots, k_m\}$ so that
$k_1 < k_2 < \ldots < k_m$.  Note that $k_i \in [n-m+i]$ for $1 \le i \le m$.
We are going to recursively build circular permutations 
$\sigma = \sigma_0, \sigma_1, \ldots, \sigma_m$ with $\sigma_i \in \mathfrak{C}_{n-m+i}$
and $\Theta_{n-m}(\sigma_i) = \{k_1, k_2, \ldots, k_i\}$ for $1 \le i \le m$.

Suppose that $\sigma = \sigma_0, \ldots, \sigma_i$ have been built.
Write $\sigma_i \in \mathfrak{C}_{n-m+i}$ as 
$(h_1, \ldots, h_{n-m+i})$, where $h_1, \ldots, h_{n-m+i}$ is a listing
of  $[n-m+i]$ in some order and we recognize two such representations
as describing the same circular permutation if each can be obtained
from the other by a circular permutation of the the entries.
We first add one to each entry of $(h_1, \ldots, h_{n-m+i})$ that is
greater than $k_{i+1}$, thereby producing a vector 
that is still of length
$n-m+i$ and has entries that are a listing of
$\{1,\ldots,k_{i+1}\} \cup \{k_{i+1} + 2, \ldots, n-m+i+1\}$.
Now insert $k_{i+1} + 1$ immediately to the right of $k_{i+1}$,
thereby producing a vector that is now of length
$n-m+i+1$ and has entries that are a listing of $[n-m+i+1]$.

We can describe the procedure more formally as follows.
Either $k_{i+1} \le n-m+i$ or $k_{i+1} = n-m+i+1$.
In the first case, let $j^* \in [n-m+i]$ be such that
$\sigma_i(j^*) = k_{i+1}$ and
define $\sigma_{i+1} = (\sigma_{i+1}(1), \ldots, \sigma_{i+1}(n-m+i+1))$
by setting
\[
\sigma_{i+1}(j)
=
\begin{cases}
\sigma_i(j),& \quad \text{if $j \le j^*$ and $\sigma_i(j) \le k_{i+1}$,}\\
\sigma_i(j)+1,& \quad \text{if $j \le j^*$ and $\sigma_i(j) > k_{i+1}$,}\\
k_{i+1}+1,& \quad \text{if $j=j^*+1$,}\\
\sigma_i(j-1),& \quad \text{if $j > j^*+1$ and $\sigma_i(j) \le k_{i+1}$,}\\
\sigma_i(j-1)+1,& \quad \text{if $j > j^*+1$ and $\sigma_i(j) > k_{i+1}$.}\\
\end{cases}
\]
On the other hand, if $k_{i+1} = n-m+i+1$, then 
let $j^* \in [n-m+i]$ be such that
$\sigma_i(j^*) = 1$ and
define $\sigma_{i+1} = (\sigma_{i+1}(1), \ldots, \sigma_{i+1}(n-m+i+1))$
by setting
\[
\sigma_{i+1}(j)
=
\begin{cases}
\sigma_i(j),& \quad \text{if $j < j^*$,}\\
k_{i+1}=n-m+i+1,& \quad \text{if $j=j^*$,}\\
\sigma_i(j-1),& \quad \text{if $j > j^*$.}\\
\end{cases}
\]

It is not difficult to check in either case
that a cyclic permutation of the
coordinates in the chosen representation of $\sigma_i$ induces a cyclic
permutation in the coordinates of $\sigma_{i+1}$, and so 
$\sigma_i \mapsto \sigma_{i+1}$ is a 
well-defined map from $\mathfrak{C}_{n-m+i}$
to $\mathfrak{C}_{n-m+i+1}$.  
It is clear that $\Theta_{n-m+i+1}(\sigma_{i+1}) = \{k_1, \ldots, k_{i+1}\}$.

\begin{example}
\label{E:circular_perm}
Here are two examples of the construction just described.
Suppose that $n=7$, $\sigma = (3,1,6,5,7,2,4)$, $m=3$
and $\{k_1,k_2,k_3\} = \{3,5,6\}$.  
We begin by adding one to each entry of $\sigma$ greater than $k_1 = 3$.
This gives us
\[
(3,1,7,6,8,2,5).
\]
We then insert $4 = k_1 + 1$ immediately to the right of $k_1 = 3$ to get
\[
\sigma_1 = (\underline{3},4,1,7,6,8,2,5).
\]
Now we add one to each entry greater than $k_2 = 5$.
This gives us
\[
(3,4,1,8,7,9,2,5).
\]
We then insert $6 = k_2 + 1$ immediately to the right of $k_2 = 5$ to get
\[
\sigma_2 = (\underline{3},4,1,8,7,9,2,\underline{5},6).
\]
We next add one to each entry greater than $k_3 = 6$.
This gives us
\[
(3,4,1,9,8,10,3,5,6).
\]
Lastly, we insert $7 = k_3 + 1$ immediately to the right of $k_3 = 6$ to get
\[
\sigma_3 = (\underline{3},4,1,9,8,10,2,\underline{5},\underline{6},7).
\]
Suppose that
$n=7$, $\sigma = (6,1,3,5,4,7,2)$, $m=3$ and $\{k_1, k_2, k_3\} = \{5,8,9\}$.
Then,
\[
\sigma_1 = (7,1,3,\underline{5},6,4,8,2),
\]
\[
\sigma_2 = (7,1,3,\underline{5},6,4,\underline{8},9,2),
\]
and
\[
\sigma_3 = (7,1,3,\underline{5},6,4,\underline{8},\underline{9},10,2).
\]
\end{example}

It remains to show that each of the maps $\sigma_i \mapsto \sigma_{i+1}$
is invertible.  Suppose we have the circular permutation
$\sigma_{i+1} \in \mathfrak{C}_{n-m+i+1}$ 
with $\Theta_{n-m+i+1}(\sigma_{i+1}) = \{k_1, \ldots, k_{i+1} \}$.
The circular permutation $\sigma_i \in \mathfrak{C}_{n-m+i}$
is recovered as follows.
If $k_{i+1} \leq n-m+i$, then let $j_* \in [n-m+i+1]$ be such that
$\sigma_{i+1}(j_*) = k_{i+1}$ and
and define $\sigma_i = (\sigma_i(1), \ldots, \sigma_i(n-m+i))$
by setting
\[
\sigma_i(j)
=
\begin{cases}
\sigma_{i+1}(j),& \quad \text{if $j \le j_*$ and $\sigma_{i+1}(j) \le k_{i+1}$,}\\
\sigma_{i+1}(j)-1,& \quad \text{if $j \le j_*$ and $\sigma_{i+1}(j) > k_{i+1}+1$,}\\
\sigma_{i+1}(j+1),& \quad \text{if $j > j_*+1$ and $\sigma_{i+1}(j) \le k_{i+1}$,}\\
\sigma_{i+1}(j+1)-1,& \quad \text{if $j > j_*+1$ and $\sigma_{i+1}(j) > k_{i+1}+1$.}\\
\end{cases}
\]
On the other hand, if $k_{i+1} = n-m+i+1$, then let $j_* \in [n-m+i+1]$ 
be such that $\sigma_{i+1}(j_*) = k_{i+1} = n-m+i+1$
and define $\sigma_i = (\sigma_i(1), \ldots, \sigma_i(n-m+i))$
by setting
\[
\sigma_i(j)
=
\begin{cases}
\sigma_{i+1}(j),& \quad \text{if $j < j_*$,}\\
\sigma_{i+1}(j+1),& \quad \text{if $j \ge j_*$.}\\
\end{cases}
\]

\begin{example}
\label{E:circular_perm_inverse}
We illustrate the inversion procedure just outlined with the second
example in Example~\ref{E:circular_perm}.  We start with
\[
\sigma_3 = (7,1,3,\underline{5},6,4,\underline{8},\underline{9},10,2).
\]
remove the entry $9$ and subtract $1$ from every entry greater than $9$
to produce
\[
\sigma_2 = (7,1,3,\underline{5},6,4,\underline{8},9,2).
\]
We then remove the entry $8$ and subtract $1$ from every entry greater than $8$
to produce
\[
\sigma_1 = (7,1,3,\underline{5},6,4,8,2).
\]
Lastly, we remove the entry $5$ and subtract $1$ from every entry greater than 
$5$ to produce
\[
\sigma = (6,1,3,5,4,7,2).
\]
\end{example}

\begin{remark}
Note that
\[
\begin{split}
& \#\{\sigma \in \mathfrak{C}_n : \Theta_n(\sigma) = \emptyset\} \\
&  \quad = (n-1)! \mathbb{P}\{\mathbf{U}_n = \emptyset\} \\
& \quad \sum_{h=0}^{n-1} (-1)^h \binom{n}{h} 
(n-h-1)! \\
& \quad + (-1)^{n}. \\
\end{split}
\]
The values of this quantity for $1 \le n \le 10$
are 
\[
0, 0, 1, 1, 8, 36, 229, 1625, 13208, 120288.
\]
Recall that the
number of permutations of $[n]$ with no fixed points (that is,
the number of derangements of $n$) is given by
\[
D(n)=n!\sum_{j=0}^n \frac{(-1)^j}{j!}
\]
and values of this quantity for $1 \le n \le 10$ are
\[
0, 1, 2, 9, 44, 265, 1854, 14833, 133496, 1334961.
\]
A comparison of these sequences suggests that
\begin{equation}
\label{E:derangements_Theta}
D(n)
=
\#\{\sigma \in \mathfrak{C}_n : \Theta_n(\sigma) = \emptyset\}
+
\#\{\sigma \in \mathfrak{C}_{n+1} : \Theta_{n+1}(\sigma) = \emptyset\},
\end{equation}
and this follows readily from the observation that
\[
\begin{split}
& \binom{n+1}{h+1}(n-h-1)! - \binom{n}{h}(n-h-1)! \\
& \quad =
\frac{n!}{h! (n-h)!} (n-h-1)! \left[\frac{n+1}{h+1}-1\right] \\
& \quad =
\frac{n!}{h! (n-h)!} (n-h-1)! \frac{n-h}{h+1} \\
& \quad =
\frac{n!}{(h+1)!}. \\
\end{split}
\]

A bijective proof of \eqref{E:derangements_Theta} follows from
Corollary 2 of \cite{MR2992405}, where it is shown via a bijection
that
\begin{equation}
\label{E:derangements_quasi-fixed_points}
D(n) 
= 
\#\{\sigma \in \mathfrak{C}_{n+1} : 
\tilde \sigma(j) \ne j+1, \, j \in [n]\}.
\end{equation}
If $\sigma \in \mathfrak{C}_{n+1}$ is such that
$\tilde \sigma(j) \ne j+1$ for $j \in [n]$, then either
$\tilde \sigma(j) \ne j+1 \mod n$ for $j \in [n+1]$, so that
$\Theta_{n+1}(\sigma) = \emptyset$, or 
$\tilde \sigma(j) \ne j+1$ for $j \in [n]$ and $\tilde \sigma(n+1) = 1$.
The set of $\sigma$ in the latter category
are in a bijective correspondence with
the set of $\tau \in \mathfrak{C}_n$ such that
$\Theta_n(\tau) = \emptyset$ via the bijection that
sends a $\sigma\in \mathfrak{C}_{n+1}$ to the 
$\tau \in \mathfrak{C}_n$ given by
\[
\tilde \tau(j) 
= 
\begin{cases}
\tilde \sigma(j),& \quad \text{if $\tilde \sigma(j) \ne n+1$}, \\
\sigma(n+1) = 1,& \quad \text{if $\tilde \sigma(j) = n+1$}.\\
\end{cases}
\]

The identity \eqref{E:derangements_quasi-fixed_points} has the following
probabilistic interpretation: if $\Pi_n$ is a uniform random permutation of $[n]$
and $\Gamma_{n+1}$ is a uniform random $n+1$-cycle in $\mathfrak{S}_{n+1}$,
then
\[
\mathbb{P}\{\#\{k \in [n] : \Pi_n(k) = k\}=0\} 
= 
\mathbb{P}\{\#\{k \in [n] : \Gamma_{n+1}(k) = k+1\}=0\}.
\]
It is, in fact, the case that the two random sets
$\mathbf{F}_n := \{k \in [n] : \Pi_n(k) = k\}$ and 
$\mathbf{G}_n := \{k \in [n] : \Gamma_{n+1}(k) = k+1\}$ have the same distribution.
We will show this using an argument similar to that in Section~\ref{S:Markov}.
Suppose that $\Pi_1, \Pi_2, \ldots$ are generated using the Chinese
Restaurant Process and $\Gamma_2, \Gamma_3, \ldots$ are generated recursively 
by constructing $\Gamma_{n+1}$ from $\Gamma_n$ by picking $K$ uniformly
at random from $[n]$ and replacing $(\ldots,K,\Gamma_n(K),\ldots)$ 
in the cycle representation of $\Gamma_n$ by 
$(\ldots,K,n+1,\Gamma_n(K),\ldots)$.  It is clear that the random set
$\mathbf{F}_n$ is exchangeable.  The process $\mathbf{G}_2, \mathbf{G}_3, \ldots$
is Markovian: writing $N_n := \# \mathbf{G}_n$ and 
$\mathbf{G}_n = \{Y_1^n, \ldots, Y_{N_n}^n\}$, we have
\[
\mathbb{P}\{\mathbf{G}_{n+1} = \{Y_1^n, \ldots, Y_{N_n}^n\} \setminus \{Y_i^n\}
\, | \, \mathbf{G}_n\} 
= 
\frac{1}{n}, \quad 1 \le i \le N_n,
\]
corresponding to $n+1$ being inserted immediately to the right of $Y_i$,
\[
\mathbb{P}\{\mathbf{G}_{n+1} = \{Y_1^n, \ldots, Y_{N_n}^n\} \cup \{Y_i^n\}
\, | \, \mathbf{G}_n\} 
= 
\frac{1}{n},
\]
corresponding to $n+1$ being inserted immediately to the right of $n$,
and
\[
\mathbb{P}\{\mathbf{G}_{n+1} = \{Y_1^n, \ldots, Y_{N_n}^n\}
\, | \, \mathbf{G}_n\}
=
\frac{n - N_n - 1}{n}.
\]
It is obvious from the symmetry inherent in these transition 
probabilities and induction that $\mathbf{G}_n$
is an exchangeable random subset of $[n]$ for all
$n$. It therefore suffices to show that $N_{n+1}$ has the same
distribution as $M_n := \# \mathbf{F}_n$.  Observe that
$M_1 = N_2 = 1$.  It is clear that $N_2, N_3, \ldots$
is a Markov chain with the following transition probabilities
\[
\mathbb{P}\{N_{n+1} = N_n - 1 \, | \, N_n\} = \frac{N_n}{n},
\]
\[
\mathbb{P}\{N_{n+1} = N_n  \, | \, N_n\} = \frac{n - N_n - 1}{n},
\]
and
\[
\mathbb{P}\{N_{n+1} = N_n + 1 \, | \, N_n\} = \frac{1}{n}.
\]
It follows from the Chinese Restaurant construction that
\[
\mathbb{P}\{M_{n+1} = M_n - 1 \, | \, M_n\} = \frac{M_n}{n+1},
\]
\[
\mathbb{P}\{M_{n+1} = M_n \, | \, M_n\} = \frac{(n+1) - M_n - 1}{n+1},
\]
and
\[
\mathbb{P}\{M_{n+1} = M_n + 1 \, | \, M_n\} = \frac{1}{n+1},
\]
and so $M_n$ and $N_{n+1}$ do indeed have the same distribution
for all $n$.
\end{remark}

\section{Random commutators}
\label{S:commutator}

If we write $\rho$ for the permutation of $[n]$ given
by $\rho(i) = i+1 \mod n$, then the random set
$\mathbf{U}_n$ of Section~\ref{S:circular} is nothing other than
\[
\{i \in [n] : \rho \Pi_n(i) = \Pi_n \rho(i)\}
\]
or, equivalently, the set
\[
\{i \in [n] : \rho^{-1} \Pi_n^{-1} \rho \Pi_n(i) = i\}.
\]
This is just the set of fixed points of the
commutator $[\rho,\Pi_n] = \rho^{-1} \Pi_n^{-1} \rho \Pi_n$.
In this section we investigate  the asymptotic behavior of the
distribution of the set of fixed points of the
commutators $[\eta_n,\Pi_n]$ for a sequence of permutations 
$(\eta_n)_{n \in \mathbb{N}}$, where $\eta_n \in \mathfrak{S}_n$.

Write $\chi_n : \mathfrak{S}_n \to \{0,1,\ldots,n\}$
for the function that 
gives the number of fixed points (i.e.
$\chi_n$ is the character of the defining representation
of $\mathfrak{S}_n$).
It follows from  of \cite[Corollary 1.2]{MR1300595} (see also
 of \cite[Theorem 25]{MR2674623}) that if $\Pi_n'$ and $\Pi_n''$ are
independent uniformly distributed permutations of $[n]$, then
the distribution of $\chi_n([\Pi_n',\Pi_n''])$ is approximately Poisson
with expected value $1$ when $n$ is large. 

The results of \cite{MR1300595, MR2674623} suggest that if $n$ is
large and $\eta_n$ is a ``generic'' element of $\mathfrak{S}_n$, then
the distribution of $\chi_n([\eta_n, \Pi_n])$ should be close to Poisson
with expected value $1$.  
Of course, such a result
will not hold for arbitrary sequences $(\eta_n)_{n \in \mathbb{N}}$.  
For example, if $\eta_n$ is the identity permutation,
then $\chi_n([\eta_n,\Pi_n]) = n$.
The behavior of
$\chi_n([\eta_n,\Pi_n])$ for a deterministic sequence
$(\eta_n)_{n \in \mathbb{N}}$ 
does not appear to have been investigated in the literature.  
However, we note that
if $\tilde \Pi_n$ is an independent uniform permutation of
$[n]$, then 
\[
\begin{split}
\chi_n([\eta_n,\Pi_n]) 
& = \chi_n(\eta_n^{-1} \Pi_n^{-1} \eta_n \Pi_n) \\
& = \chi_n(\tilde \Pi_n^{-1} \, \eta_n^{-1} \Pi_n^{-1} \eta_n \Pi_n \, \tilde \Pi_n) \\
& = 
\chi_n(\tilde \Pi_n^{-1} \eta_n^{-1} \tilde \Pi_n \; \tilde \Pi_n^{-1} \, \Pi_n^{-1} \eta_n \Pi_n \, \tilde \Pi_n). \\
& = \#\{i \in [n] : U_n(i) = V_n(i)\},
\end{split}
\]
where
\[
U_n := \tilde \Pi_n^{-1} \eta_n \tilde \Pi_n
\]
and
\[
V_n := \tilde \Pi_n^{-1} \, \Pi_n^{-1} \eta_n \Pi_n \, \tilde \Pi_n
\]
are independent random permutations of $[n]$ that
are uniformly distributed on the conjugacy class of $\eta_n$.
Since $U_n$ has the same distribution as $U_n^{-1}$, we see that
$\chi_n([\eta_n,\Pi_n])$ is distributed as the number of fixed points of
the random permutation $U_n V_n$ and we could, in principle,
determine the distribution of $\chi_n([\eta_n,\Pi_n])$ if we knew the
the distribution of the conjugacy class to which $U_n V_n$ belongs.
Given a partition $\lambda \vdash n$, write $C_\lambda$ for the
conjugacy class of $\mathfrak{S}_n$ consisting of permutations with
cycle lengths given by $\lambda$ and let
$K_\lambda$ be the element $\sum_{\pi \in C_\lambda} \pi$ of the group
algebra of $\mathfrak{S}_n$.  If $C_\nu$ is another conjugacy class
with cycle lengths $\mu \vdash n$, then, writing $\ast$ for the multiplication
in the group algebra,
$K_\lambda \ast K_\mu = \sum_{\nu \vdash n} c_{\lambda \mu}^\nu K_\nu$
for nonnegative integer coefficients $c_{\lambda \mu}^\nu$.
Denote by $\gamma_n \vdash n$ the partition of $n$ given by the
cycles lengths of $\eta_n$. If we knew $c_{\gamma_n \gamma_n}^\nu$
for all $\nu \vdash n$, then we would know the
distribution of the conjugacy class to which $U_n V_n$ belongs and hence,
in principle the distribution of $\chi_n([\eta_n,\Pi_n])$.
Unfortunately, the determination of the coefficients $c_{\lambda \mu}^\nu$
appears to be a rather difficult problem.  The special case when 
$\lambda = \mu = n$ (that is, the conjugacy class of $n$-cycles
is being multiplied by itself) is treated in
\cite{MR558612, MR676430, MR1032633} and fairly explicit formulae for some
other simple cases are given in \cite{MR1165162, MR1273294}, but in general
there do not seem to be usable expressions.

In order to get a better feeling for what sort of conditions we will need
to impose on $(\eta_n)_{n \in \mathbb{N}}$ to get the hoped for Poisson
limit, we make a couple of simple observations.

Firstly, it follows that if we write
$f_n := \chi_n(\eta_n)$ for the number of fixed points of $\eta_n$, then
\[
\mathbb{E}[\chi_n([\eta_n,\Pi_n])] 
= 
n \mathbb{P}\{U_n(i) = V_n(i)\}
=
n 
\left[
\left(\frac{n - f_n}{n}\right)^2 \frac{1}{n-1}
+
\left(\frac{f_n}{n}\right)^2
\right],
\]
and so it appears that we will at least require some control on the sequence
$(f_n)_{n \in \mathbb{N}}$.

A second, and somewhat more subtle, potential difficulty 
becomes apparent if we consider
the permutation $\eta_n$ that is made up entirely of $2$-cycles (so that
$n$ is necessarily even).  In this case, 
$U_n(i) = V_n(i)$ if and only if $U_n(U_n(i)) = i = V_n(V_n(i))$,
and so $\chi_n([\eta_n,\Pi_n])$ is even.  Going a little further, we may
write $m = n/2$, take $\eta_n$ to have the cycle decomposition 
$(1,m+1)(2,m+2) \cdots (m,2m)$, and note that 
$\chi_n([\eta_n,\Pi_n]) = \#\{i \in [n] : U_n(i) = V_n(i)\}$ 
has the same distribution as 
$\#\{i \in [n] : U_n(i) = \eta_n(i)\} 
= 2 \#\{i \in [m] : U_n(i) = \eta_n(i)\}
= 2 M_n$,
where $M_n:= \sum_{i=1}^m I_{ni}$, with
$I_{ni}$ the indicator of the event $\{U_n(i) = \eta_n(i)\}$.
It is not difficult to show that
\[
\begin{split}
\mathbb{E}[M_n(M_n-1)\cdots (M_n-k+1)] 
& = 
\frac{m(m-1) \cdots (m-k+1)}{(2m-1)(2m-3) \cdots (2m - 2k + 1)} \\
& \rightarrow \frac{1}{2^k} \quad \text{as $m \to \infty$}, \\
\end{split}
\]
and so the distribution of $\chi_n([\eta_n,\Pi_n])/2$ converges to a Poisson
distribution with expected value $\frac{1}{2}$.

Returning to the case of a general permutation $\eta_n$ and
writing $t_n$ for the number of $2$-cycles in the cycle decomposition of $\eta_n$,
it seems that in order for the distribution of the random variable
$\chi_n([\eta_n,\Pi_n])$ to be close to that of a Poisson random variable
with expected value $1$ when $n$ is large we will need to at least
impose suitable conditions on $f_n$ and $t_n$. It will, in fact, suffice
to suppose that $f_n$ and $t_n$ are bounded as $n$ varies,
as the following result shows.

%
%

\begin{theorem}
Suppose that $a,b > 0$.  There exists a constant $K$ that depends
on $a$ and $b$ but not on $n \in \mathbb{N}$ such that if 
$\Pi$ is uniformly distributed on $\mathfrak{S}_n$ and
$\eta \in \mathfrak{S}_n$ has at most $a$ fixed points
and at most $b$ $2$-cycles, then the total variation
distance between the distribution of the number of fixed points
of the commutator $[\eta,\Pi]$ and a Poisson distribution with
expected value $1$ is at most $\frac{K}{n}$.
\end{theorem}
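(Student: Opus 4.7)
\emph{Plan.} The plan is to write $W:=\chi_n([\eta,\Pi])=\sum_{i=1}^n I_i$, where $I_i:=\mathbf{1}\{\eta\Pi(i)=\Pi\eta(i)\}$ records whether $i$ is fixed by $[\eta,\Pi]=\eta^{-1}\Pi^{-1}\eta\Pi$, and then apply the Chen--Stein method for Poisson approximation. First I would compute $\mathbb{E}[W]$ exactly by separating indices according to whether $\eta(i)=i$: if $i$ is a fixed point of $\eta$ then $\mathbb{E}[I_i]=f/n$ (with $f\le a$ the number of fixed points of $\eta$), while if $\eta(i)\ne i$ then the constraint $\eta\Pi(i)=\Pi\eta(i)$ forces $\Pi(i)$ to lie outside $\mathrm{Fix}(\eta)$ and then pins down $\Pi(\eta(i))$, giving $\mathbb{E}[I_i]=(n-f)/[n(n-1)]$. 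Summing produces
\[
\mathbb{E}[W]=\frac{f^2}{n}+\frac{(n-f)^2}{n(n-1)}=1+O(1/n),
\]
using the hypothesis $f\le a$; then it suffices to bound $d_{\mathrm{TV}}(\mathcal{L}(W),\mathrm{Poi}(\mathbb{E}[W]))$ by $O(1/n)$ and add the harmless correction $|\mathbb{E}[W]-1|=O(1/n)$.

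For the Chen--Stein step I would use the dependency neighborhood
\[
B_i:=\{j\in[n]:\{j,\eta(j),\eta^{-1}(j)\}\cap\{i,\eta(i),\eta^{-1}(i)\}\ne\emptyset\},
\]
which has size bounded by an absolute constant because every element of $[n]$ lies in a unique $\eta$-orbit. Then $b_1:=\sum_i\sum_{j\in B_i}\mathbb{E}[I_i]\mathbb{E}[I_j]=O(1/n)$ since each factor is $O(1/n)$ and there are $O(n)$ summands. The term $b_2:=\sum_i\sum_{j\in B_i\setminus\{i\}}\mathbb{E}[I_iI_j]$ splits according to whether $i$ lies in a $2$-cycle of $\eta$ or a longer orbit. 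For $i$ in an orbit of length at least three, the event $I_iI_{\eta(i)}=1$ constrains three distinct values of $\Pi$ and has probability $O(1/n^2)$. For a $2$-cycle $\{i,j\}=\{i,\eta(i)\}$, however, $I_iI_j=1$ forces $\Pi(i)\in\mathrm{Fix}(\eta^2)$, a set of cardinality $f+2t\le a+2b$, so $\mathbb{E}[I_iI_j]=O((a+b)/n^2)$; summing over the $2t\le 2b$ ordered pairs of $2$-cycles contributes $O(b^2/n^2)=O(1/n^2)$. Altogether $b_2=O(1/n)$, and here the bound on the number of $2$-cycles is essential, as the paper's discussion of the all-involution case already indicates.

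The remaining ingredient, and the main technical obstacle, is the residual dependence term
\[
b_3:=\sum_i\mathbb{E}\bigl|\mathbb{E}[I_i\mid(I_j)_{j\notin B_i}]-\mathbb{E}[I_i]\bigr|,
\]
which is not zero because $\Pi$ is a permutation rather than a family of independent variables: conditioning on far-away indicators perturbs $\mathcal{L}(\Pi(i),\Pi(\eta(i)))$ through the global permutation constraint. I would handle this either by constructing a size-biased coupling in the style of Chatterjee--Diaconis--Meckes (re-randomizing $\Pi$ on a short $\eta$-orbit to force one extra intertwining relation) and bounding $\mathbb{E}|W^s-W-1|=O(1/n)$, or, more concretely, by directly computing the factorial moments orbit-class by orbit-class, proving $|\mathbb{E}[(W)_k]-1|\le C_k/n$ with constants $C_k=C_k(a,b)$ and then invoking a Kerstan-type bound after truncating at $k$ of order $\log n$. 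Either route should absorb the long-range permutation dependence into the desired $K/n$ bound, with $K$ depending only on $a$ and $b$.
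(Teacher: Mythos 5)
Your mean and local-pair computations are sound, and you correctly diagnose that the crux is the Arratia--Goldstein--Gordon remainder $b_3$: since $\Pi$ is a single permutation rather than a family of independent draws, $I_i$ is not independent of $(I_j)_{j\notin B_i}$, and in the dependency-graph Chen--Stein framework there is no reason $b_3$ should be $O(1/n)$ without further argument. But you stop there. Neither of your two suggested fixes is actually carried out. The ``size-biased coupling in the style of Chatterjee--Diaconis--Meckes'' is a misattribution --- the result in \cite{MR2121796} is an exchangeable-pair theorem, not a size-biasing one --- and in any case you construct no coupling and prove no bound on $\mathbb{E}|W^s-W-1|$. The factorial-moment alternative needs explicit bounds $|\mathbb{E}[(W)_k]-1|\le C_k/n$ \emph{with controlled growth of $C_k$ in $k$}; you assert these without computing a single joint moment or tracking the constants, and without that control the proposed $\log n$ truncation does not deliver a total-variation estimate. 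As written, the argument identifies the difficulty created by the global permutation constraint but does not resolve it.

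The paper's proof avoids any $b_3$-type term by a different decomposition. First, $\chi_n([\eta,\Pi])$ is shown to have the same law as $\#\{i\in[n]:U(i)=V(i)\}$ for \emph{independent} $U,V$ uniform on the conjugacy class of $\eta$, which symmetrizes the problem. Second, the $O(a+b)$ indices lying in the fixed points or $2$-cycles of $U$ or $V$ are discarded, and one intersects with a further benign event $A$ involving two auxiliary uniform labels $I,J$; together these truncations change the count only on an event of probability $O(1/n)$. Third, swapping $I$ and $J$ in the cycle representation of $V$ yields an exchangeable pair $(W,W')$; the transition probabilities $\mathbb{P}\{W'=W\pm1\mid(U,V)\}$ are computed in closed form and matched to $W/n$ and $1/n$ (up to a constant factor) with $O(1/n^2)$ error, which is exactly what the exchangeable-pair Poisson theorem of \cite{MR2121796} needs to give an $O(1/n)$ total-variation bound. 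This is essentially the coupling your proposal gestures at but never builds, and the whole point of the exchangeable-pair formulation is that no conditioning on distant indicators ever enters the analysis.
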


\begin{proof}
As we have observed above, the number of fixed points of 
$[\eta,\Pi]$ has the same distribution as
$\# \{i \in [n] : U(i) = V(i) \}$, where $U$ and $V$ are independent
random permutations that are uniformly distributed on the conjugacy class of 
$\eta$.  We will write $\chi$ for $\chi_n$ to simplify notation.
Similarly, we write $f$ for the number of fixed points of $\eta$
and $t$ for the number of $2$-cycles.  We assume that
$f \le a$ and $t \le b$.

Let $F_U$ and $T_U$ be the random subsets of
$[n]$ that are, respectively, the fixed points of $U$
and the elements that belong to the $2$-cycles of $U$.  Define $F_V$
and $T_V$ similarly.  Set 
\[
N := \# \{i \in [n] : U(i) = V(i), 
\; i \notin F_U \cup T_U \cup F_V \cup T_V\}.
\]
Observe that
\[
\mathbb{P}\{U(i) = V(i), 
\; i \notin F_U \cup T_U \cup F_V \cup T_V\}
=\left(\frac{n - f - 2t}{n}\right)^2 \frac{1}{n-1},
\]
so 
\[
\begin{split}
\mathbb{P}\{\chi([\eta,\Pi]) \ne N\}
& \le
\mathbb{E}[\chi([\eta,\Pi])] - \mathbb{E}[N] \\
& =
n 
\left[
\left(\frac{n - f}{n}\right)^2 \frac{1}{n-1}
+
\left(\frac{f}{n}\right)^2
- 
\left(\frac{n - f - 2t}{n}\right)^2 \frac{1}{n-1}
\right]. \\
\end{split}
\]
In particular, $n \mathbb{P}\{\chi([\eta,\Pi]) \ne N\}$
is bounded in $n$.

Let $I,J$ be chosen elements
uniformly without replacement from $[n]$
and independent of the permutations $U$ and $V$.
Set
\[
A := 
\{(I,J) \cap (F_U \cup T_U \cup F_{V} \cup T_{V}) = \emptyset\}
\]
and
\[
W := N \ind_A.
\]
Note that
\[
\mathbb{P}\{W \ne N\} \le \mathbb{P}(A^c) 
= 1 - \left(\frac{n - f - 2t}{n} \frac{n - f - 2t - 1}{n-1} \right)^2,
\]
so that $n \mathbb{P}\{W \ne N\}$, and hence 
$n \mathbb{P}\{W \ne  \chi([\eta,\Pi])\}$,
is bounded in $n$.

It will therefore suffice to show that the total variation
distance between the distribution of $W$ and a Poisson distribution with
expected value $1$ is at most a constant muliple of $\frac{1}{n}$.
We will do this using Stein's method.  More precisely, we will use
the version in \cite[Section 1]{MR2121796} that depends on the construction
of an {\em exchangeable pair}; that is, another random variable $W'$
such that $(W,W')$ has the same distribution as $(W',W)$.

Build another random permutation $V'$ by
interchanging $I$ and $J$ in the cycle representation
of $V$.  If, using a similar notation to that above, we set 
\[
N' := 
\#\{i \in [n] : U(i) = V'(i) 
\; i \notin F_U \cup T_U \cup F_{V'} \cup T_{V'}\}
\]
and
\[
W' := N' \ind_A,
\]
then $(W,W')$ is clearly an exchangeable pair.
We can represent the permutations $U$ and $V$ when the event
$A$ occurs as in Figure~\ref{fig:general_position}.
\begin{figure}[htbp]
	\centering
		\includegraphics[width=1.00\textwidth]{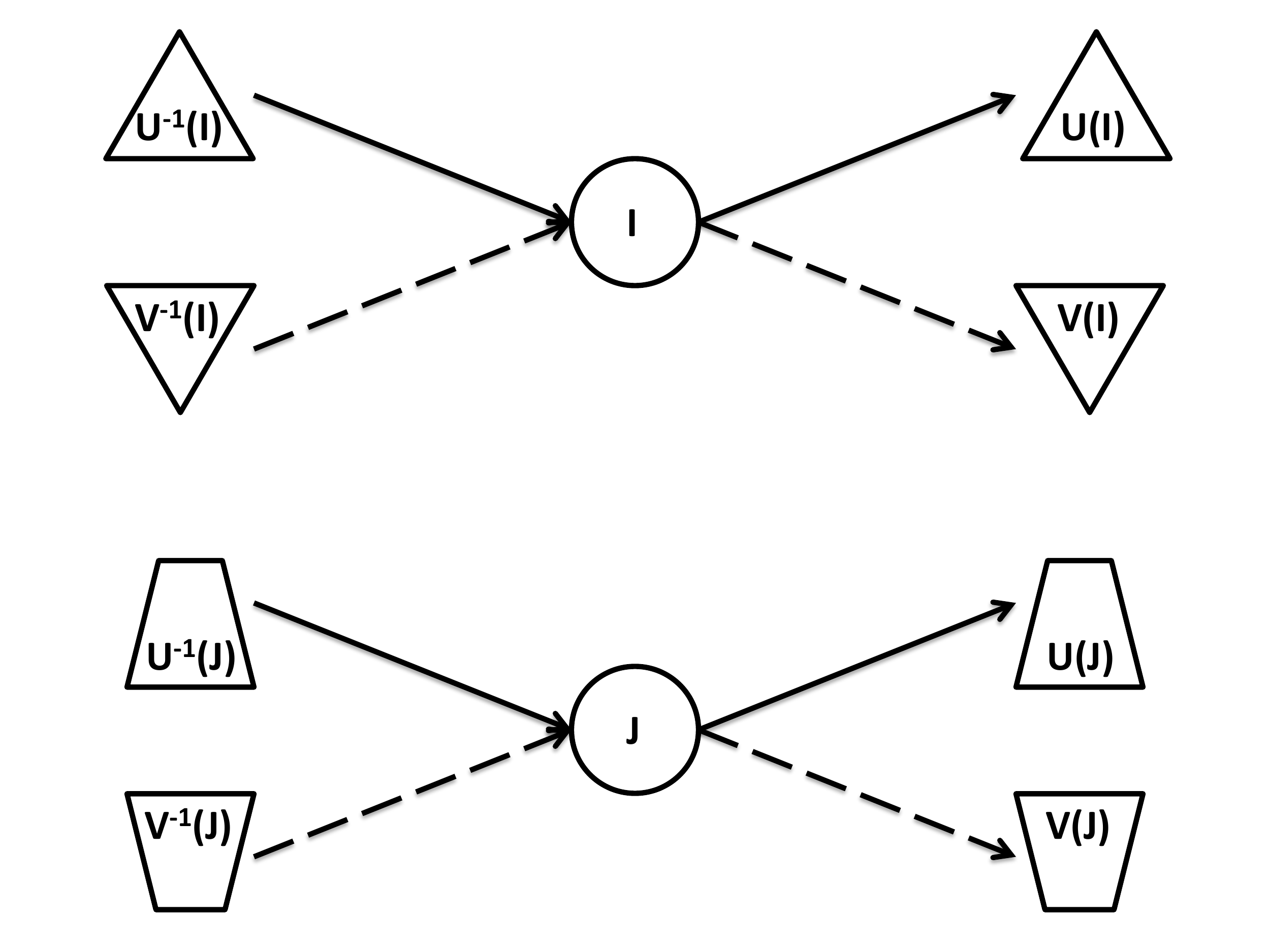}
	\caption{The effect of the permutations $U$ and $V$ on the elements 
	$I$ and $J$ when the event $A$ occurs.  
	The solid arrows depict the action of $U$ and the dashed arrows 
	depict the action of $V$. The components of the
triple $(U^{-1}(I), I, U(I))$
are distinct.  The same is true of the components of the
triples $(V^{-1}(I), I, V(I))$, $(U^{-1}(J), J, U(J))$, and
$(V^{-1}(J), J, V(J))$. However, it may happen that $U(I) = V(I)$, 
$U(I) = V(J)$, etc.}
\label{fig:general_position}
\end{figure}

We have
\begin{equation}
\label{E:diff_Wprime_W_full}
\begin{split}
W' & = W \\
& \quad - \ind\{U^{-1}(I) = V^{-1}(I)\} \cap A - \ind\{U(I) = V(I)\} \cap A \\
& \quad - \ind\{U^{-1}(J) = V^{-1}(J)\} \cap A - \ind\{U(J) = V(J)\} \cap A \\
& \quad + \ind\{U^{-1}(I) = V^{-1}(J)\} \cap A + \ind\{U(J) = V(I)\} \cap A\\
& \quad + \ind\{U^{-1}(J) = V^{-1}(I)\} \cap A + \ind\{U(I) = V(J)\} \cap A. \\
\end{split}
\end{equation}

Note that
\[
\begin{split}
& \mathbb{P}\left(\{U^{-1}(I) = V^{-1}(I)\} \cap A \, | \, (U,V)\right)
= \mathbb{P}\left(\{U(I) = V(I)\} \cap A \, | \, (U,V)\right) \\
& \quad = 
\mathbb{P}\left(\{U^{-1}(J) = V^{-1}(J)\} \cap A \, | \, (U,V)\right)
= \mathbb{P}\left(\{U(J) = V(J) \} \cap A \, | \, (U,V)\right) \\
& \quad =
\left(\frac{n - f - 2t}{n} \frac{n - f - 2t - 1}{n-1} \right)^2
\frac{W}{n-1} \\
& \quad = \frac{W}{n} + X_n,
\end{split}
\]
where $X_n$ is a random variable such that if we set
$b_n := \mathbb{E}[|X_n|]$,
then $n^2 b_n$ is bounded in $n$.
Furthermore,
\[
\begin{split}
& \mathbb{P}\left(\{U^{-1}(I) = V^{-1}(J)\} \cap A \, | \, (U,V) \right)
=
\sum_{k=1}^n 
\mathbb{P}\left(\{U^{-1}(I) = V^{-1}(J) = k\} \cap A \, | \, (U,V) \right) \\
& \quad =
\sum_{k=1}^n 
\mathbb{P}\left(\{I = U(k), \, J = V(k)\} \cap A \, | \, (U,V) \right) \\
& \quad =
n 
\left(\frac{n - f - 2t}{n} \frac{n - f - 2t - 1}{n-1} \right)^2
\left(\frac{n-1}{n} \frac{1}{n - f - 2t - 1} \right)^2 \\
& \quad =
\frac{1}{n} + c_n,
\\
\end{split}
\]
where $c_n$ is a constant such that $n^2 c_n$
is bounded in $n$,
and similar arguments show that
\[
\begin{split}
& \mathbb{P}\left(\{U(J) = V(I)\} \cap A \, | \, (U,V) \right) \\
& \quad = 
\mathbb{P}\left(\{U^{-1}(J) = V^{-1}(I)\} \cap A \, | \, (U,V) \right) 
= 
\mathbb{P}\left(\{U(I) = V(J)\} \cap A \, | \, (U,V) \right) \\
& \quad = 
n 
\left(\frac{n - f - 2t}{n} \frac{n - f - 2t - 1}{n-1} \right)^2
\left(\frac{n-1}{n} \frac{1}{n - f - 2t - 1} \right)^2 \\
& \quad = \frac{1}{n} + c_n. \\
\end{split}
\]

Suppose we can show that the probability of the intersection of
any two of the events whose indicators appear 
on the right-hand side of \eqref{E:diff_Wprime_W_full}
is at most a constant $d_n$, where $n^2 d_n$ is bounded in $n$, then
\[
\mathbb{E} 
\left[\left|W - \frac{n}{4} \mathbb{P}\{W' = W-1 \, | \, (U,V)\}\right|\right]
\le n b_n + 7 n d_n
\]

\[
\mathbb{E} \left[\left|1 - \frac{n}{4} \mathbb{P}\{W' = W+1 \, | \, (U,V)\}\right|\right]
\le n |c_n| + 7 n d_n.
\]
It will follow from the main result of \cite[Section 1]{MR2121796} that
the total variation distance between the distribution of
$W$ and a Poisson distribution with expected value $1$
is at most $\frac{C}{n}$ for a suitable constant $C$,
and hence, as we have already remarked, 
the same is true (with a larger constant) for the distribution of
$\chi([\eta,\Pi])$.

Consider the event
$\{U^{-1}(I) = V^{-1}(I)\} \cap \{U(I) = V(I)\} \cap A$, which
we represent diagrammatically in Figure~\ref{fig:A_cccccccc_intersection_1}.
\begin{figure}[htbp]
	\centering
		\includegraphics[width=1.00\textwidth]{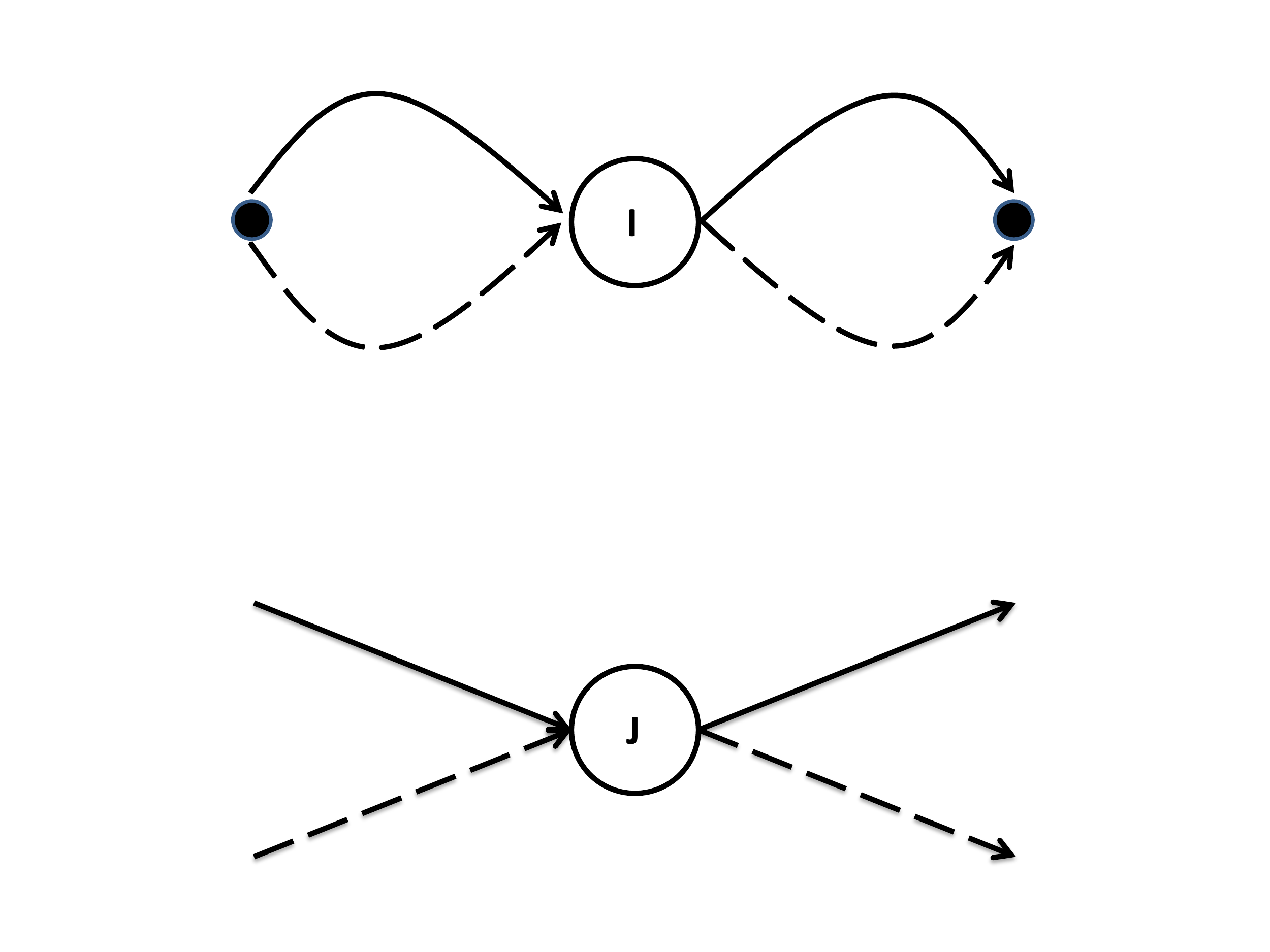}
	\caption{Diagram for the event 
	\[\{U^{-1}(I) = V^{-1}(I)\} \cap \{U(I) = V(I)\} \cap A.\]}
	\label{fig:A_cccccccc_intersection_1}
\end{figure}
The probability of this event is
\[
\left(
\frac{n - f - 2 t}{n}
\frac{n - f - 2t - 1}{n-1}
\right)^2
\frac{1}{n-2} \frac{1}{n-3}.
\]
As another example, consider the event
$\{U^{-1}(J) = V^{-1}(I)\} \cap \{U(I) = V(J)\} \cap A$,
which we represent diagrammatically in
 Figure~\ref{fig:A_cccccccc_intersection_2}.
\begin{figure}[htbp]
	\centering
		\includegraphics[width=1.00\textwidth]{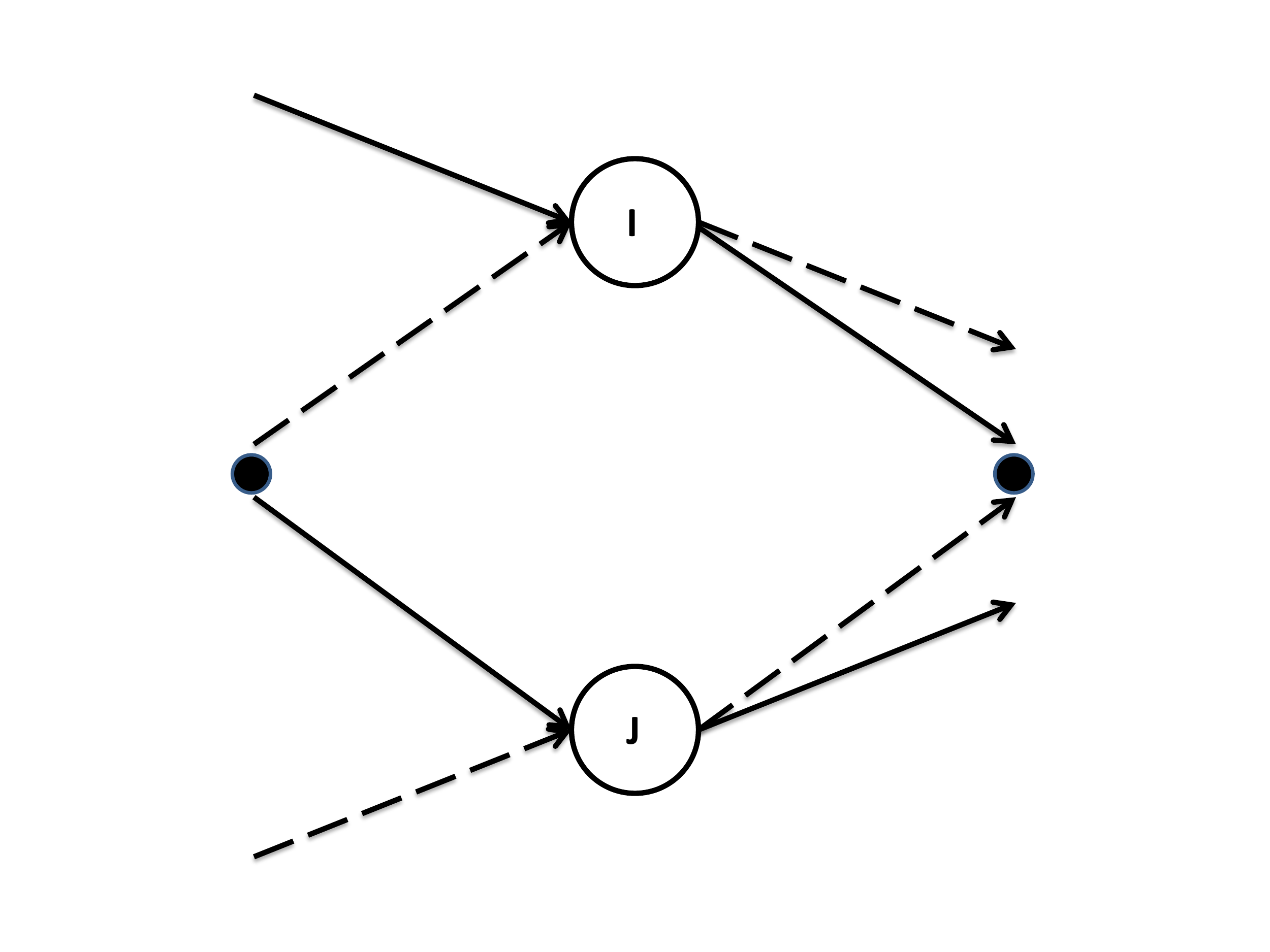}
	\caption{Diagram for the event 
	\[\{U^{-1}(J) = V^{-1}(I)\} \cap \{U(I) = V(J)\} \cap A.\]}
	\label{fig:A_cccccccc_intersection_2}
\end{figure}
The probability of this event is also
\[
\left(
\frac{n - f - 2t}{n}
\frac{n - f - 2 t - 1}{n-1}
\right)^2
\frac{1}{n-2} \frac{1}{n-3}.
\]
Continuing in this way, we see that, as required, the probability of
the intersection of any two of the events whose indicators appear 
on the right-hand side of \eqref{E:diff_Wprime_W_full}
is at most a constant $d_n$, where $n^2 d_n$ is bounded in $n$.
\end{proof}

\bigskip
\noindent
{\bf Acknowledgments:}  We thank Jim Pitman for getting us interested
in the area investigated in this paper, sharing with us
the contents of his results
Theorem~\ref{T:main} and Remark~\ref{R:exchangeable}, 
and telling us about the results in \cite{Whi63}.  
We thank Steve Butler for helpful discussions
about circular permutations and an anonymous referee for
several helpful suggestions.

\providecommand{\bysame}{\leavevmode\hbox to3em{\hrulefill}\thinspace}
\providecommand{\MR}{\relax\ifhmode\unskip\space\fi MR }
\providecommand{\MRhref}[2]{%
  \href{http://www.ams.org/mathscinet-getitem?mr=#1}{#2}
}
\providecommand{\href}[2]{#2}

\end{document}